\theoremstyle{plain}
\newtheorem{Thm}{Theorem}[section]
\newtheorem{Prop}[Thm]{Proposition}
\newtheorem{Lem}[Thm]{Lemma}
\newtheorem*{PET}{The Pointwise Ergodic Theorem}
\theoremstyle{definition}
\newtheorem{Def}[Thm]{Definition}
\newtheorem{Notation}[Thm]{Notation}
\numberwithin{equation}{section}
\newcommand{\res}{\upharpoonright}
\newcommand{\reg}{\text{reg}}
\newcommand{\Alt}{\operatorname{Alt}}
\newcommand{\Fix}{\operatorname{Fix}}
\newcommand{\Var}{\operatorname{Var}}
\newcommand{\Wr}{\mathbin{\text{wr}}}
\newcommand{\Sub}{\text{Sub}}
\newcommand{\Sym}{\operatorname{Sym}}
\begin{document}


\begin{abstract}
We classify the ergodic invariant random subgroups of strictly diagonal limits
of finite symmetric groups.
\end{abstract}

\title
[Invariant random subgroups]{Invariant random subgroups of strictly diagonal limits of finite symmetric groups}


\author{Simon Thomas}
\address
{Mathematics Department \\
Rutgers University \\
110 Frelinghuysen Road \\
Piscataway \\
New Jersey 08854-8019 \\
USA}
\email{sthomas@math.rutgers.edu}
\author{Robin Tucker-Drob}
\email{rtuckerd@gmail.com}
\thanks{Research partially supported by NSF Grants DMS 1101597 and DMS 1303921.}


\maketitle

\section{Introduction} \label{S:intro}

Let $G$ be a countable discrete group and let $\Sub_{G}$ be the compact space
of subgroups $H \leqslant G$. Then a Borel probability measure $\nu$ on $\Sub_{G}$ which is invariant 
under the conjugation action of $G$ on $\Sub_{G}$ is called an {\em invariant random subgroup\/} or $IRS$. 
For example, if $N \trianglelefteq G$ is a normal
subgroup, then the Dirac measure $\delta_{N}$ is an IRS of $G$.
Further examples arise from from the stabilizer distributions of measure preserving actions, 
which are defined as follows. Suppose that $G$ acts via measure preserving maps on the 
Borel probability space $(\, Z, \mu \,)$ and let $f: Z \to \Sub_{G}$ be the
$G$-equivariant map defined by
\[
z \mapsto G_{z} = \{\, g \in G \mid g \cdot z = z \,\}.
\]
Then the corresponding {\em stabilizer distribution\/} $\nu = f_{*}\mu$ is an IRS 
of $G$. In fact, by a result of Abert-Glasner-Virag \cite{agv}, every IRS of $G$ can be realized as the 
stabilizer distribution of a suitably chosen measure preserving action. Moreover, 
by Creutz-Peterson \cite{cp}, if $\nu$ is an ergodic IRS of $G$, then $\nu$ is the stabilizer distribution 
of an ergodic action $G \curvearrowright (\, Z, \mu\,)$.

A number of recent papers have focused on the problem of studying the IRS's
of certain specific countably infinite groups. For example, Bowen \cite{b} has shown
that each free group $\mathbb{F}_{m}$ of rank $m \geq 2$ has a huge ``zoo'' of IRS's;
and Bowen-Grigorchuk-Kravchenko \cite{bgk} have proved that the same is
true of the lamplighter groups $(\, \mathbb{Z}/p\mathbb{Z})^{n} \Wr \mathbb{Z}$, where
$p$ is a prime and $n \geq 1$. On the other hand, Vershik \cite{v2} has given a complete
classification of the ergodic invariant random subgroups of the group of finitary 
permutations of the natural numbers.\footnote{There is a slight inaccuracy in the statement \cite{v2} of
Vershik's classification theorem.} In this paper, we will classify the ergodic invariant 
subgroups of the strictly diagonal limits of finite symmetric groups, which are defined as follows.

Suppose that $\Sym(\Delta)$, $\Sym(\Omega)$ are finite symmetric groups and that
$|\Omega| = \ell |\Delta|$. Then an embedding $\varphi: \Sym(\Delta) \to \Sym(\Omega)$ is said to be
an {\em $\ell$-fold diagonal embedding\/} if $\varphi(\Sym(\Delta))$ acts via its natural permutation
representation on each of its orbits in $\Omega$. The countable locally finite group $G$ is a 
{\em strictly diagonal limit\/} of finite symmetric groups if we can express 
$G = \bigcup_{n \in \mathbb{N}} G_{n}$ as the union of an increasing chain of finite
symmetric groups $G_{n} = \Sym(X_{n})$, where each embedding 
$G_{n} \hookrightarrow G_{n+1}$ is an $|X_{n+1}|/|X_{n}|$-fold diagonal embedding. 
In this case, we say that $G$ is an SD$\mathcal{S}$-group. Here, letting 
$[k] = \{\, 0,1, \cdots, k-1\,\}$, we can suppose that for some sequence $(k_{n})$ of
natural numbers $k_{n} \geq 2$, we have that $X_{n} = \prod_{0 \leq m \leq n} [k_{m}]$ and
that the embedding $\varphi_{n}: G_{n} \to G_{n+1}$ is defined by
\[
\varphi_{n}(g) \cdot (\, i_{0},\cdots, i_{n}, i_{n+1}\,) =
(\, g(i_{0},\cdots, i_{n}), i_{n+1}\,).
\]
Let $X = \prod_{n \geq 0} [k_{n}]$ and let $\mu$ be the product probability measure of the
uniform probability measures on the $[k_{n}]$. Then $G$ acts naturally on $(\, X, \mu\,)$ as
a group of measure preserving transformations via
\[
g \cdot (\, i_{0},\cdots, i_{n}, i_{n+1}, i_{n+2}, \cdots \,) =
(\, g(i_{0},\cdots, i_{n}), i_{n+1}, i_{n+2}, \cdots\,),  \quad g \in G_{n}.
\]
It is easily checked that the action $G \curvearrowright (\, X, \mu\,)$ is
weakly mixing and it follows that the diagonal action of $G$ on the product space
$(\, X^{r}, \mu^{\otimes r}\,)$ is ergodic for each $r \in \mathbb{N}^{+}$. Hence
the stabilizer distribution $\sigma_{r}$ of $G \curvearrowright (\, X^{r}, \mu^{\otimes r}\,)$
is an ergodic IRS of $G$. We will show that if $G$ is simple, then 
$\{\, \delta_{1}, \delta_{G} \,\} \cup \{\, \sigma_{r} \mid r \in \mathbb{N}^{+} \,\}$ is 
a complete list of the ergodic IRS's of $G$. A moment's thought shows that $G$ is simple if and only
if $k_{n}$ is even for infinitely many $n \in \mathbb{N}$. Suppose now that $k_{n}$ is odd
for all but finitely many $n \in \mathbb{N}$. Then clearly 
$A(G) = \bigcup_{n \in \mathbb{N}} \Alt(X_{n})$ is a simple subgroup of $G$ such that
$[\, G: A(G)\,] = 2$. For each $r \in \mathbb{N}^{+}$, let 
$\tilde{f}_{r}: X^{r} \to \Sub_{G}$ be the $G$-equivariant map defined 
\[
(x_{0}, \cdots, x_{r-1}) = \Bar{x} \mapsto G_{\Bar{x}} \cap A(G), 
\]
where $G_{\Bar{x}} = \{\, g \in G \mid g \cdot x_{i} = x_{i} \text{ for } 0 \leq i < r \,\}$.
Then $\tilde{\sigma}_{r} = (\tilde{f}_{r})_{*}\mu^{\otimes r}$ is also an ergodic IRS of $G$.

\begin{Thm} \label{T:main}
With the above notation, if $G$ is a simple SD$\mathcal{S}$-group, then the ergodic IRS's of $G$ are
\[
\{\, \delta_{1}, \delta_{G} \,\} \cup \{\, \sigma_{r} \mid r \in \mathbb{N}^{+} \,\};
\]
while if $G$ is a non-simple SD$\mathcal{S}$-group, then the ergodic IRS's of $G$ are
\[
\{\, \delta_{1}, \delta_{A(G)}, \delta_{G} \,\} \cup \{\, \sigma_{r} \mid r \in \mathbb{N}^{+} \,\}
\cup \{\, \tilde{\sigma}_{r} \mid r \in \mathbb{N}^{+} \,\}.
\]
\end{Thm}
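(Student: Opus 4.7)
The plan is to analyze, for an ergodic IRS $\nu$ and a $\nu$-random $H$, the chain of finite intersections $H_{n} := H \cap G_{n}$ as $n$ grows, and to show that ergodicity of $\nu$ together with conjugation-invariance under every $G_{m}$ forces $H$ to be one of the listed possibilities. The central structural step is to establish that, with $\nu$-probability one, for every sufficiently large $n$ the subgroup $H_{n} \leq \Sym(X_{n})$ lies in a sandwich
\[
\Alt(X_{n} \setminus F_{n}) \leq H_{n} \leq \Sym(X_{n} \setminus F_{n}),
\]
where $F_{n} := \Fix(H_{n}) \subseteq X_{n}$ is its fixed-point set and $\Sym(X_{n} \setminus F_{n})$ is viewed as acting trivially on $F_{n}$. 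This sandwich lemma is expected to be the main technical obstacle; the approach is to exploit that the distribution of $H_{n}$ is invariant under $G_{m}$-conjugation for every $m > n$, and that the $\ell$-fold diagonal embeddings $\varphi_{n}, \ldots, \varphi_{m-1}$ make such conjugations act highly transitively on $X_{n}$, so that (in combination with the classification of normal subgroups of finite symmetric groups) subgroups of $\Sym(X_{n})$ of intransitive-block or wreath-product type are ruled out as possible values of $H_{n}$.

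Once the sandwich is in place, the next task is to track the sequence $(F_{n})$. Since $H_{n} \subseteq H_{n+1}$ and $X_{n+1} = X_{n} \times [k_{n+1}]$, one has $F_{n+1} \subseteq F_{n} \times [k_{n+1}]$, so the cylinders $C_{n} := F_{n} \times [k_{n+1}] \times [k_{n+2}] \times \cdots \subseteq X$ are decreasing, and the limit $F_{\infty} := \bigcap_{n} C_{n}$ is a random closed subset of $X$ whose distribution under $\nu$ is $G$-invariant. The $\mu$-measure $\mu(F_{\infty})$ is then $\nu$-a.s.\ constant by ergodicity. A further analysis, using invariance of $\nu$ under conjugation by column-permuting elements inside the later $G_{m}$, should force $F_{\infty}$ to be either all of $X$, or empty, or else $\nu$-a.s.\ an unordered $r$-element subset $\{x_{0}, \ldots, x_{r-1}\} \subseteq X$ for some $\nu$-a.s.\ constant $r \geq 1$.

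In the finite-$r$ case, $G$-invariance of the distribution of $F_{\infty}$ as an unordered point of $X^{r}$, together with ergodicity of $G \curvearrowright (X^{r}, \mu^{\otimes r})$, identifies this distribution with the push-forward of $\mu^{\otimes r}$. Conditional on $F_{\infty}$, the remaining freedom in $H$ is the ``$\Alt$ versus $\Sym$'' choice inside the sandwich at each level $n$, and these choices must be coherent as $n$ varies. A tail-field argument then forces a single almost sure choice; in the simple case (infinitely many even $k_{n}$), conjugation inside $G_{n+1}$ by a swap of two columns above a single point of $X_{n} \setminus F_{n}$ produces transpositions of $X_{n+1} \setminus F_{n+1}$ that already lie in any sandwich containing the alternating group, collapsing the two options and yielding $\sigma_{r}$ uniquely, while in the non-simple case both $\sigma_{r}$ and $\tilde{\sigma}_{r}$ survive. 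The degenerate cases $F_{\infty} = X$ and $F_{\infty} = \emptyset$ yield, respectively, $\delta_{1}$ and (according to the $\Alt/\Sym$ choice) one of $\delta_{G}, \delta_{A(G)}$.

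The hard part is the sandwich lemma: translating the measure-theoretic conjugation-invariance of $\nu$ together with the diagonal combinatorics of the embeddings into the algebraic conclusion that $H_{n}$ is essentially the pointwise stabilizer of $F_{n} \subseteq X_{n}$ (up to the $\Alt/\Sym$ ambiguity on the support). Once this is secured, the downstream ergodicity, the dynamics of $G \curvearrowright X^{r}$, and the combinatorics of the $\Alt/\Sym$ dichotomy should combine comparatively routinely to yield the full classification.
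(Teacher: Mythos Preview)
Your proposal has a real gap at exactly the step you flag as hardest. The claim that ``the distribution of $H_{n}$ is invariant under $G_{m}$-conjugation for every $m > n$'' does not hold in any useful sense: the map $H \mapsto H \cap G_{n}$ is equivariant only for the normalizer $N_{G}(G_{n})$, and the conjugation action of $N_{G}(G_{n})$ on $\Sub_{G_{n}}$ factors through $G_{n}$ itself. So all you can extract is that the law of $H_{n}$ is $G_{n}$-conjugation invariant. That invariance cannot rule out imprimitive or intransitive subgroups, since families such as setwise stabilizers of $r$-subsets, or block stabilizers $\Sym(d) \Wr \Sym(|X_{n}|/d)$, are unions of $G_{n}$-conjugacy classes and carry $G_{n}$-invariant probability measures. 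Ergodicity of $\nu$ under $G$ does not help either: the event $\{H : H_{n} \text{ lies in a given } G_{n}\text{-conjugacy class of subgroups}\}$ is not $G$-invariant, so it need not have measure $0$ or $1$. In short, the mechanism you sketch (conjugation invariance plus ``high transitivity'' of the diagonal embedding) does not force the sandwich; you also elide a genuine further step by taking $F_{n} = \Fix(H_{n})$ from the start, whereas a priori $H_{n}$ may act nontrivially on a small invariant set.

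The paper's route to this structural step is quite different. It realizes $\nu$ as the stabilizer distribution of an ergodic action $G \curvearrowright (Z,\mu)$ (Creutz--Peterson), fixes a generic $z$ with stabilizer $H$, and works with the associated character $\chi(g) = \mu(\Fix_{Z}(g))$, which by the Pointwise Ergodic Theorem equals $\lim_{n} |g^{G_{n}} \cap H_{n}|/|g^{G_{n}}|$. The sandwich is then obtained by \emph{quantitative} estimates on these normalized permutation characters: the Praeger--Saxl bound handles the primitive case, Stirling-type counting handles imprimitive groups of bounded block-size, and a second-moment probabilistic argument handles both unbounded block-size and unboundedly large invariant subsets. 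In each case the relevant structural hypothesis on $H_{n}$ forces $\chi(g)=0$ either for every $g \neq 1$ or for some specific $g \in H$, contradicting the choice of $z$. A separate compressibility (Nadkarni-type) argument is then needed to upgrade ``$H_{n}$ has an invariant set $U_{n}$ of size $r$'' to ``$H_{n}$ fixes $U_{n}$ pointwise'', after which unique ergodicity on the resulting space of subgroups finishes the proof.
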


By Creutz-Peterson \cite{cp}, in order to prove Theorem \ref{T:main}, it is enough to show that the
stabilizer distribution $\sigma$ of each ergodic action $G \curvearrowright (\, Z, \mu\,)$ is included
in the above list of invariant random subgroups. Our analysis of the action $G \curvearrowright (\, Z, \mu\,)$
will proceed via an application of the Pointwise Ergodic Theorem to the associated character
$\chi(g) = \mu(\, \Fix_{Z}(g)\,)$, which will enable us to regard $G \curvearrowright (\, Z, \mu\,)$
as the ``limit'' of a suitable sequence of finite permutation groups 
$G_{n} \curvearrowright (\, \Omega_{n}, \mu_{n} \,)$, where $\mu_{n}$ is the uniform probability measure 
on $\Omega_{n}$. (In other words, we will follow the {\em asymptotic approach to characters\/} of 
Kerov-Vershik \cite{vk,vk2}.)

\begin{Def} \label{D:char}
If $\Gamma$ is a countable discrete group, then the function $\chi: \Gamma \to \mathbb{C}$ is a
{\em character\/} if the following conditions are satisfied:
\begin{enumerate}
\item[(i)] $\chi(h\,g\,h^{-1}) = \chi(g)$ for all $g$, $ \in \Gamma$.
\item[(ii)] $\sum_{i,j=1}^{n} \lambda_{i} \Bar{\lambda}_{j} \chi(g_{j}^{-1}g_{i}) \geq 0$
for all $\lambda_{1}, \cdots, \lambda_{n} \in \mathbb{C}$ and $g_{1}, \cdots, g_{n} \in \Gamma$.
\item[(iii)] $\chi(1_{G}) = 1$.
\end{enumerate}
A character $\chi$ is said to be {\em indecomposable\/} or {\em extremal\/} if it is not possible to
express $\chi = r \chi_{1} + (1 - r) \chi_{2}$, where $0 < r < 1$ and $\chi_{1} \neq \chi_{2}$ are
distinct characters.
\end{Def}

In earlier work, Leinen-Puglisi \cite{lp} and Dudko-Medynets \cite{dm} classified the characters 
of the SD$\mathcal{S}$-groups; and combining their classification and Theorem \ref{T:main},
we obtain that if $G$ is a simple SD$\mathcal{S}$-group, then the indecomposable characters of $G$ are
precisely the associated characters of the ergodic IRS's of $G$. However, it should be stressed that our
work neither makes use of nor implies the classification theorem of Leinen-Puglisi and Dudko-Medynets.
(See Vershik \cite{v} for some fascinating conjectures concerning the relationship between
invariant random subgroups and characters.) 

This paper is organized as follows. In Section \ref{S:point}, we will discuss the Pointwise Ergodic Theorem for 
ergodic actions of countably infinite locally finite finite groups. In Section \ref{S:outline},
we will briefly outline the strategy of the proof of Theorem \ref{T:main}. In Sections
\ref{S:stirling} and \ref{S:prob}, we will prove a series of key lemmas concerning the asymptotic
values of the normalized permutation characters of various actions $S \curvearrowright S/H$,
where $S$ is a suitable finite symmetric group. Finally, in Section \ref{S:proof}, we will present
the proof of Theorem \ref{T:main}.

\section{The pointwise ergodic theorem} \label{S:point} 

In this section, we will discuss the Pointwise Ergodic Theorem for ergodic actions of countably
infinite locally finite finite groups. Throughout $G = \bigcup G_{n}$ is the union of a
strictly increasing chain of finite subgroups $G_{n}$ and $G \curvearrowright (\, Z, \mu\,)$ 
is an ergodic action on a Borel probability space. The following is a special case
of more general results of Vershik \cite[Theorem 1]{ver} and Lindenstrauss \cite[Theorem 1.3]{l}.

\begin{PET} \label{T:pet}
With the above hypotheses, if $f \in L^{1}(Z,\mu)$, then for $\mu$-a.e. $z \in Z$, 
\[
\int f\, d\mu = \lim_{n \to \infty} \frac{1}{|G_{n}|} \sum_{g \in G_{n}} f(g \cdot z).
\]
\end{PET}

In particular, the Pointwise Ergodic Theorem applies when $f$ is the characteristic function
of the Borel subset $\Fix_{Z}(g) = \{\, z \in Z \mid g \cdot z = z \,\}$ for some $g \in G$. From now on,
for each $z \in Z$ and $n \in \mathbb{N}$, let $\Omega_{n}(z) = \{\, g \cdot z \mid g \in G_{n}\,\}$
be the corresponding $G_{n}$-orbit. 

\begin{Thm} \label{T:point}
With the above hypotheses, for $\mu$-a.e. $z \in Z$, for all $g \in G$,
\[
\mu(\, \Fix_{Z}(g) \,) = \lim_{n \to \infty} |\, \Fix_{\Omega_{n}(z)}(g)\,|/|\, \Omega_{n}(z)\,|.  
\]
\end{Thm}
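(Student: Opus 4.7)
The plan is to prove this as a direct consequence of the Pointwise Ergodic Theorem applied to the characteristic functions $\mathbf{1}_{\Fix_Z(g)}$, combined with a counting identity that rewrites the ergodic average in terms of fixed-point counts on the finite orbits $\Omega_n(z)$.

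First, fix $g \in G$ and choose $N$ large enough that $g \in G_N$. For any $n \geq N$ and any $z \in Z$, I would observe that by the orbit-stabilizer theorem, each $y \in \Omega_n(z)$ is represented as $h \cdot z$ by exactly $|G_n|/|\Omega_n(z)|$ elements $h \in G_n$. Grouping the sum over $h \in G_n$ according to the orbit point $h \cdot z$, one obtains the key identity
\[
\frac{1}{|G_n|} \sum_{h \in G_n} \mathbf{1}_{\Fix_Z(g)}(h \cdot z) \;=\; \frac{|\Fix_{\Omega_n(z)}(g)|}{|\Omega_n(z)|},
\]
valid for every $n \geq N$. (For $n < N$ the right-hand side is irrelevant for the limit.)

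Next, I would apply the Pointwise Ergodic Theorem with $f = \mathbf{1}_{\Fix_Z(g)} \in L^{1}(Z,\mu)$. This gives a $\mu$-conull set $Z_g \subseteq Z$ on which the left-hand side of the displayed identity converges to $\int f\, d\mu = \mu(\Fix_Z(g))$ as $n \to \infty$. Combining with the identity above yields, for every $z \in Z_g$,
\[
\mu(\Fix_Z(g)) = \lim_{n\to\infty} \frac{|\Fix_{\Omega_n(z)}(g)|}{|\Omega_n(z)|}.
\]

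Finally, since $G$ is countable, the intersection $Z^{*} = \bigcap_{g \in G} Z_g$ is still $\mu$-conull, and every $z \in Z^*$ satisfies the desired convergence simultaneously for all $g \in G$. The main (and only) subtlety is really bookkeeping: one has to justify exchanging the quantifiers ``for a.e.\ $z$'' and ``for all $g$'', which is exactly what countability of $G$ buys. The counting identity and the appeal to the Pointwise Ergodic Theorem are otherwise routine.
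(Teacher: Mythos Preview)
Your proof is correct and follows essentially the same route as the paper: apply the Pointwise Ergodic Theorem to $\mathbf{1}_{\Fix_Z(g)}$, rewrite the ergodic average via orbit--stabilizer as $|\Fix_{\Omega_n(z)}(g)|/|\Omega_n(z)|$, and then use countability of $G$ to pass from ``for each $g$, a.e.\ $z$'' to ``a.e.\ $z$, for all $g$''. The only cosmetic difference is that the paper names the stabilizer $H_n$ explicitly rather than invoking orbit--stabilizer in words, and your restriction to $n \geq N$ is harmless but unnecessary since the counting identity holds for all $n$.
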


\begin{proof}
Fix some $g \in G$. Then by the Pointwise Ergodic Theorem, for $\mu$-a.e. $z \in Z$, 
\[
\mu(\, \Fix_{Z}(g) \,) = \lim_{n \to \infty} \frac{1}{|G_{n}|} 
|\, \{\, h \in G_{n} \mid h \cdot z \in \Fix_{Z}(g) \,\}\,|.
\]
Fix some such $z \in Z$; and for each $n \in \mathbb{N}$, let 
$H_{n} = \{\, h \in G_{n} \mid h \cdot z = z \,\}$ be the corresponding point stabilizer. Then clearly,
\[
|\, \{\, h \in G_{n} \mid h \cdot z \in \Fix_{Z}(g) \,\}\,| = 
|\, \Fix_{\Omega_{n}(z)}(g)\,|\,|\,H_{n}\,|;
\]
and so we have that
\begin{align*}
\frac{1}{|G_{n}|} |\, \{\, h \in G_{n} \mid h \cdot z \in \Fix_{Z}(g) \,\}\,|
&= |\, \Fix_{\Omega_{n}(z)}(g)\,|/ [\, G_{n}:H_{n}\,] \\
&= |\, \Fix_{\Omega_{n}(z)}(g)\,|/|\, \Omega_{n}(x)\,|.  
\end{align*}
The result now follows easily.
\end{proof}

Clearly the normalized permutation character $|\, \Fix_{\Omega_{n}(z)}(g)\,|/|\, \Omega_{n}(z)\,|$ 
is the probability that an element of $(\, \Omega_{n}(z), \mu_{n} \,)$ is fixed by $g \in G_{n}$, 
where $\mu_{n}$ is the uniform probability measure on $\Omega_{n}(z)$; and, in this sense, we can regard 
$G \curvearrowright (\, Z, \mu\,)$ as the ``limit'' of the sequence of finite permutation groups 
$G_{n} \curvearrowright (\, \Omega_{n}(z), \mu_{n} \,)$. Of course, the permutation group
$G_{n} \curvearrowright \Omega_{n}(z)$ is isomorphic to $G_{n} \curvearrowright G_{n}/H_{n}$,
where $G_{n}/H_{n}$ is the set of cosets of $H_{n} = \{\, h \in G_{n} \mid h \cdot z = z \,\}$ in $G_{n}$.
The following simple observation will play a key role in our later applications of Theorem \ref{T:point}. 

\begin{Prop} \label{P:conj}
If $H \leqslant S$ are finite groups and $\theta$ is the normalized permutation character
corresponding to the action $G \curvearrowright S/H$, then
\[
\theta(g) = \frac{|\, g^{S} \cap H\,|}{|\, g^{S}\,|} = 
\frac{|\, \{ s \in S \mid s g s^{-1} \in H \,\}|}{|S|}.
\]
\end{Prop}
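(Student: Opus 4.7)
The plan is to unwind the definition of the normalized permutation character and then apply the orbit--stabilizer theorem twice: once for the action of $S$ on $S/H$, and once for the conjugation action of $S$ on the conjugacy class $g^{S}$.

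First I would write $\theta(g) = |\Fix_{S/H}(g)|/[S:H]$. A coset $sH$ lies in $\Fix_{S/H}(g)$ iff $gsH = sH$, i.e.\ $s^{-1}gs \in H$; substituting $t = s^{-1}$, this is $tgt^{-1} \in H$. Thus the set $F = \{\, s \in S \mid sgs^{-1} \in H\,\}$ is in bijection (via inversion) with $F' = \{\, t \in S \mid t^{-1}gt \in H\,\}$, so $|F| = |F'|$. Next I would show that $F'$ is a union of left cosets of $H$: if $t \in F'$ and $h \in H$, then $(th)^{-1}g(th) = h^{-1}(t^{-1}gt)h \in H$. Hence $|F| = |F'| = |H|\,|\Fix_{S/H}(g)|$, which gives
\[
\theta(g) = \frac{|\Fix_{S/H}(g)|}{|S|/|H|} = \frac{|F|}{|S|} = \frac{|\{\, s \in S \mid sgs^{-1} \in H\,\}|}{|S|},
\]
establishing the second equality of the proposition.

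For the first equality I would consider the conjugation map $\varphi: S \to g^{S}$, $s \mapsto sgs^{-1}$. This is surjective, and every fiber is a coset of the centralizer $C_{S}(g)$, so has size $|C_{S}(g)| = |S|/|g^{S}|$. Restricting to the preimage of $H \cap g^{S}$ gives
\[
|F| = |\varphi^{-1}(H \cap g^{S})| = |C_{S}(g)|\,|g^{S} \cap H| = \frac{|S|}{|g^{S}|}\,|g^{S} \cap H|,
\]
and dividing by $|S|$ yields $|g^{S} \cap H|/|g^{S}|$.

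There is no real obstacle here: the statement is a direct double-counting identity, and the only tiny subtlety is keeping track of which side the conjugation is on (i.e.\ the difference between $sgs^{-1}$ and $s^{-1}gs$), which is handled by the inversion bijection $F \leftrightarrow F'$.
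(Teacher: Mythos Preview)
Your proof is correct and follows essentially the same approach as the paper's: both arguments reduce to the observation that $aH \in \Fix(g)$ iff $a^{-1}ga \in H$, and both use orbit--stabilizer (fibers of the conjugation map have size $|C_{S}(g)|$) to relate the count of such $a$ to $|g^{S}\cap H|$. The only cosmetic differences are that the paper proves the two equalities in the opposite order and does not bother with the inversion bijection $F \leftrightarrow F'$, since the same centralizer count applies directly to either form of conjugation.
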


\begin{proof}
Fix some $g \in S$. In order to see that the first equality holds, note that for each $a \in S$, 
we have that
\[
aH \in \Fix(g) \quad \Longleftrightarrow \quad a^{-1}g\,a \in H.
\]
Hence, counting the number of such $a \in S$, we see that
\[
|\,\Fix(g)\,|\,|\,H\,| = |\, g^{S} \cap H \,|\,|\, C_{S}(g)\,| 
                       = |\, g^{S} \cap H \,|\,|\, S \,|/|\, g^{S}\,|.
\]
It follows that 
\[
\theta(g) = |\, \Fix(g)\,|/[\,S:H\,] = |\, g^{S} \cap H\,|/|\, g^{S}\,|.
\]
To see that the second inequality holds, note that
\[
\frac{|\, \{ s \in S \mid s g s^{-1} \in H \,\}|}{|S|} =
\frac{|\, g^{S} \cap H\,|\, |\, C_{S}(g) \,| }{|\, g^{S}\,|\, |\, C_{S}(g) \,| }. 
\]
\end{proof}

\section{An outline of the proof of Theorem \ref{T:main}} \label{S:outline}

In this section, we will briefly outline the strategy of the proof of Theorem \ref{T:main}.
Let $(k_{n})$ be a sequence of natural numbers $k_{n} \geq 2$, let 
$X_{n} = \prod_{0 \leq m \leq n}[k_{m}]$, and let $G = \bigcup_{n \in \mathbb{N}}G_{n}$
be the corresponding SD$\mathcal{S}$-group with $G_{n} = \Sym(X_{n})$.
Let $\nu$ be an ergodic IRS of $G$. Then we can suppose that $\nu$ is not a 
Dirac measure $\delta_{N}$ concentrating on a normal subgroup $N \trianglelefteq G$. Let $\nu$ be
the stabilizer distribution of the ergodic action $G \curvearrowright (\, Z, \mu\,)$
and let $\chi(g) = \mu(\, \Fix_{Z}(g) \,)$ be the corresponding character. For each $z \in Z$ and 
$n \in \mathbb{N}$, let $\Omega_{n}(z) = \{\, g \cdot z \mid g \in G_{n}\,\}$.
Then, by Theorem \ref{T:point}, for $\mu$-a.e. $z \in Z$, for all $g \in G$,
\[
\mu(\, \Fix_{Z}(g) \,) = \lim_{n \to \infty} |\, \Fix_{\Omega_{n}(z)}(g)\,|/|\, \Omega_{n}(z)\,|.  
\]
Fix such an element $z \in Z$ and let $H = \{\, h \in G \mid h \cdot z = z \,\}$ be the corresponding point 
stabilizer. Clearly we can suppose that $z$ has been chosen so that if $g \in H$, then $\chi(g) > 0$.

For each $n \in \mathbb{N}$, let $H_{n} = H \cap G_{n}$. Then, examining the list of ergodic IRS's in
the statement of Theorem \ref{T:main}, we see that it is necessary to show that there exists a {\em fixed\/}
integer $r \geq 1$ such that for all but finitely many $n \in \mathbb{N}$, there is a subset
$U_{n} \subseteq X_{n}$ of cardinality $r$ such that $H_{n}$ fixes $U_{n}$ pointwise and induces at
least the alternating group on $X_{n} \smallsetminus U_{n}$. Most of our effort will devoted to
eliminating the possibility that $H_{n}$ acts transitively on $X_{n}$ for infinitely many 
$n \in \mathbb{N}$. In more detail, we will show that if $H_{n}$ acts transitively on $X_{n}$ for 
infinitely many $n \in \mathbb{N}$, then there exists an element $g \in H$ such that
\[
\mu(\, \Fix_{Z}(g) \,) = \lim_{n \to \infty} |\, g^{G_{n}} \cap H_{n}\,|/|\, g^{G_{n}}\,| =
|\, \{ s \in G_{n} \mid s g s^{-1} \in H_{n} \,\}|/|G_{n}| = 0,
\] 
which is a contradiction. Our analysis will split into three cases, depending on whether for
infinitely many $n \in \mathbb{N}$,
\begin{itemize}
\item[(i)] $H_{n}$ acts primitively on $X_{n}$; or
\item[(ii)] $H_{n}$ acts imprimitively on $X_{n}$ with a {\em fixed\/} maximal block-size $d$; or
\item[(iii)] $H_{n}$ acts imprimitively on $X_{n}$ with maximal blocksize $d_{n} \to \infty$.
\end{itemize}
Cases (i) and (ii) are easily dealt with via a straightforward counting argument based on
Stirling's Approximation, which will be presented in Section \ref{S:stirling}. Case (iii)
requires a more involved probabilistic argument which will be presented in
Section \ref{S:prob}. Essentially the same probabilistic argument will then show that there
exists a fixed integer $r \geq 1$ such that for all but finitely many $n \in \mathbb{N}$, there is an 
$H_{n}$-invariant subset $U_{n} \subseteq X_{n}$ of cardinality $r$ such that $H_{n}$ acts transitively on 
$X_{n} \smallsetminus U_{n}$. Repeating the above analysis for the action 
$H_{n} \curvearrowright X_{n} \smallsetminus U_{n}$, we will obtain that $H_{n}$ induces at
least the alternating group on $X_{n} \smallsetminus U_{n}$; and an easy application of
Nadkarni's Theorem on compressible group actions, which we will present in Section \ref{S:proof},
will show that $H_{n}$ fixes $U_{n}$ pointwise.

At this point, we will have shown that the ergodic IRS $\nu$ concentrates on the same space
of subgroups $\mathcal{S}_{r} \subseteq \Sub_{G}$ as one of the target IRS's $\sigma_{r}$
or $\tilde{\sigma}_{r}$. Finally, via another application of the Pointwise Ergodic Theorem,
we will show that the action of $G$ on $\mathcal{S}_{r}$ is uniquely ergodic and hence that
$\nu = \sigma_{r}$ or $\nu = \tilde{\sigma}_{r}$, as required.

\section{Almost primitive actions} \label{S:stirling}

In Sections \ref{S:stirling} and \ref{S:prob}, we will fix an element $1 \neq g \in \Sym(a)$ having a 
cycle decomposition consisting of $k_{i}$ $m_{i}$-cycles for $1 \leq i \leq t$, where each $k_{i} \geq 1$.
Let $\ell \gg a$ and let $\varphi: \Sym(a) \to S = \Sym(a \ell)$ be an $\ell$-fold diagonal 
embedding. Then identifying $g$ with $\varphi(g)$, the cyclic decomposition of $g \in S$ consists of 
$k_{i}\ell$ $m_{i}$-cycles for $1 \leq i \leq t$ and hence
\begin{equation} \label{E:conj}
|g^{S}| = \frac{(a \ell)!}{ \prod_{1 \leq i \leq t} (k_{i}\ell)! m_{i}^{k_{i}\ell}}.
\end{equation}
In this section, we will consider the value of the normalized permutation character
$|\, g^{S} \cap H\,|/|\, g^{S}\,|$ for the action $S \curvearrowright S/H$ as $\ell \to \infty$ 
in the cases when:
\begin{enumerate}
\item[(i)] $H$ is a primitive subgroup of $S$; or 
\item[(ii)] $H$ is an imprimitive subgroup of $S$ preserving a maximal system of
imprimitivity of fixed block-size $d$.
\end{enumerate}

In both cases, we will make use of the following theorem of Praeger-Saxl \cite{ps}. (It
is perhaps worth mentioning that the proof of Theorem \ref{T:ps} does not rely upon the
classification of the finite simple groups.)

\begin{Thm} \label{T:ps}
If $H < \Sym(n)$ is a primitive subgroup which does not contain $\Alt(n)$,
then $|H| < 4^{n}$.
\end{Thm}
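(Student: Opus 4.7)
The plan is to combine classical lower bounds on the \emph{minimal degree} $\mu(H) := \min\{\,|\mathrm{supp}(h)| : 1 \neq h \in H\,\}$ of a primitive permutation group with a recursive analysis of its point stabilizer chain, staying entirely within the pre-classification combinatorial techniques of Bochert, Jordan and Wielandt.

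First I would invoke Bochert's theorem, which asserts that if $H \leq \Sym(n)$ is primitive and does not contain $\Alt(n)$, then $\mu(H)$ is bounded below by a positive fraction of $n$. The proof of this fact is purely combinatorial, using primitivity to show that the supports of non-identity elements cannot be concentrated on a small invariant structure. Given such a lower bound $\mu(H) \geq cn$, I would fix a base point $\alpha_1$ and iteratively choose $\alpha_{i+1}$ to lie in an orbit of $H_i := H_{\alpha_1, \ldots, \alpha_i}$ of minimal size. Because every non-identity element of $H_i$ still moves at least $\mu(H) - i$ points, the orbits of $H_i$ on the remaining ground set cannot all be short, and one gets a geometric decay of $|H_i \cdot \alpha_{i+1}|$ as $i$ grows; factoring $|H| = \prod_i |H_i \cdot \alpha_{i+1}|$ and summing in logarithms yields $\log|H| \leq (\log 4)\, n$, provided the constants are tracked carefully throughout.

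The main obstacle is extracting the precise base $4$ rather than some weaker exponential constant. Bochert's own global bound $|H| \leq n!/\lceil (n+1)/2 \rceil!$ only gives roughly $n^{n/2}$, which is far larger than $4^{n}$, so one must sharpen the analysis by transitivity rank. For $2$-transitive primitive $H$, the permutation character on ordered pairs decomposes in a controllable way, forcing $|H|$ to be polynomial in $n$ except in the well-understood affine and almost simple $2$-transitive families. For merely primitive but not $2$-transitive $H$, a Jordan-style analysis of elements achieving the minimal degree, together with the observation that a primitive group containing a sufficiently short cycle must already contain $\Alt(n)$, tightens the stabilizer-chain estimate. Finally, a finite list of small degrees $n \leq n_{0}$ needs to be checked directly from the classification of primitive groups of small degree; since the bound $4^{n}$ is essentially sharp (it is achieved asymptotically by certain product-action wreath products), those base cases are precisely where weaker arguments break down and cannot be avoided.
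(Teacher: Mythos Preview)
The paper does not prove this theorem at all: Theorem~\ref{T:ps} is quoted from Praeger--Saxl~\cite{ps} and used as a black box, with only the remark that its proof does not rely on the classification of finite simple groups. So there is no proof in the paper to compare your proposal against; you are attempting to reprove a cited external result.

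As for the proposal itself, the first paragraph is in the right spirit: the actual Praeger--Saxl argument does hinge on a strong lower bound for the minimal degree of a primitive group not containing $\Alt(n)$ (they use Wielandt's $\mu(H) \geq (\sqrt{n}-1)/2$ rather than Bochert's weaker bound), followed by a stabilizer-chain/base-size estimate translating that into an upper bound on $|H|$. However, your second paragraph drifts into genuinely problematic territory. The sentence ``forcing $|H|$ to be polynomial in $n$ except in the well-understood affine and almost simple $2$-transitive families'' is effectively invoking the classification of $2$-transitive groups, which depends on the classification of finite simple groups --- precisely what the paper notes is \emph{not} needed for Theorem~\ref{T:ps}. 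Praeger and Saxl do not split off the $2$-transitive case in this way; they work uniformly with the minimal-degree bound and a careful counting of how many points a base must contain. Your ``geometric decay'' of $|H_i \cdot \alpha_{i+1}|$ is also not substantiated: minimal degree controls the support of elements, not directly the orbit lengths of iterated point stabilizers, and turning one into the other with the constant $4$ is exactly the content of the Praeger--Saxl paper, not a bookkeeping afterthought.
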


We will also make use of the following variant of Stirling's Approximation:
\begin{equation} \label{E:stirling}
1 \leq \frac{n!}{\sqrt{2 \pi n} \left(\frac{n}{e}\right)^{n}}
\leq \frac{e}{\sqrt{2\pi}}  \quad \quad \text{ for all } n \geq 1.
\end{equation}

\begin{Lem} \label{L:conj}
There exist constants $r$, $s > 0$ (which only depend on the parameters 
$a, k_{1}, \cdots, k_{t}, m_{1}, \cdots, m_{t}$) such that
\[
|g^{S}| > r\, s^{\ell} \, \ell^{( a - \sum k_{i}) \ell} \geq r\, s^{\ell} \, \ell^{\,\ell}.
\] 
\end{Lem}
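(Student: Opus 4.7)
\medskip

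\noindent\textbf{Proof proposal.} The plan is to apply Stirling's approximation \eqref{E:stirling} term by term to the explicit formula \eqref{E:conj} for $|g^{S}|$, isolate the leading factor of $\ell^{(a - \sum k_{i})\ell}$, and absorb everything else into the constants $r$ and $s$.

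First I would use the lower half of \eqref{E:stirling} on the numerator $(a\ell)!$ and the upper half on each factor $(k_{i}\ell)!$ in the denominator to obtain
\[
|g^{S}| \;\geq\; \frac{\sqrt{2\pi a \ell}\,(a\ell/e)^{a\ell}}{\prod_{i=1}^{t}\bigl(e\sqrt{k_{i}\ell}\bigr)(k_{i}\ell/e)^{k_{i}\ell}\, m_{i}^{k_{i}\ell}}.
\]
Next, I would expand $(a\ell)^{a\ell} = a^{a\ell}\ell^{a\ell}$ and $(k_{i}\ell)^{k_{i}\ell} = k_{i}^{k_{i}\ell}\ell^{k_{i}\ell}$ and collect the powers of $\ell$. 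Since the powers of $\ell$ coming from the main Stirling factors contribute $\ell^{a\ell - \sum k_{i}\ell} = \ell^{(a - \sum k_{i})\ell}$, everything else is either a constant or an explicit exponential in $\ell$. Concretely, the estimate separates as
\[
|g^{S}| \;\geq\; C(\ell)\cdot \Bigl(\frac{a^{a}\, e^{\sum k_{i} - a}}{\prod_{i} k_{i}^{k_{i}}m_{i}^{k_{i}}}\Bigr)^{\!\ell}\cdot \ell^{(a - \sum k_{i})\ell},
\]
where $C(\ell) = \sqrt{2\pi a\ell}\,/\prod_{i}(e\sqrt{k_{i}\ell})$ is a ratio of polynomial factors. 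I would then choose $s > 0$ to be the exponential base in parentheses and choose $r > 0$ to be any lower bound for $C(\ell)$ valid for $\ell$ sufficiently large; absorbing the finitely many small $\ell$ into a smaller choice of $r$ if desired gives the first inequality for all $\ell \geq 1$.

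The second inequality $\ell^{(a - \sum k_{i})\ell} \geq \ell^{\ell}$ requires $a - \sum k_{i} \geq 1$. This is the one place where the hypothesis $g \neq 1$ enters: because $a = \sum k_{i} m_{i}$, we have $a - \sum k_{i} = \sum_{i} k_{i}(m_{i}-1)$, and since $g$ is nontrivial, some $m_{i} \geq 2$, making this sum at least $1$. I do not expect any serious obstacle here; the only delicate bookkeeping is keeping the exponents of $\ell$, $e$, and the constants $a, k_{i}, m_{i}$ straight while bounding numerator and denominator in opposite directions using Stirling. Everything that is not part of the $\ell^{(a - \sum k_{i})\ell}$ leading term is either independent of $\ell$ or is of the form $C_{0}^{\ell}$ for an explicit constant $C_{0}$, and therefore fits into the required shape $r\,s^{\ell}\,\ell^{(a - \sum k_{i})\ell}$.
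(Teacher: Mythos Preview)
Your approach is essentially identical to the paper's: apply Stirling in the right directions to \eqref{E:conj}, extract the leading factor $\ell^{(a-\sum k_i)\ell}$, absorb everything else into $r$ and $s^{\ell}$, and then use $g\neq 1$ (so $a-\sum k_i=\sum k_i(m_i-1)\geq 1$) for the final inequality.

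There is one small slip. Your factor $C(\ell)=\sqrt{2\pi a\ell}\,/\prod_i(e\sqrt{k_i\ell})$ is a constant times $\ell^{(1-t)/2}$, which tends to $0$ whenever $t\geq 2$; so ``choose $r>0$ to be any lower bound for $C(\ell)$ valid for $\ell$ sufficiently large'' does not work as stated. The repair is immediate: a polynomially decaying factor is dominated below by any exponential, e.g.\ $\ell^{(1-t)/2}\geq 2^{-\ell}$ for all large $\ell$, so absorb $C(\ell)$ into $s^{\ell}$ (replacing $s$ by $s/2$, say) rather than into $r$. The paper reaches exactly the same intermediate expression and then writes ``the result now follows easily,'' so it is not being more careful than you here, just less explicit.
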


\begin{proof}
Combining (\ref{E:conj}) and (\ref{E:stirling}), it follows that there exists a constant $c > 0$
such that
\[
|g^{S}| > c \, 
\frac{\left( a \ell/e \right)^{a \ell} \sqrt{ 2 \pi a \ell}}
{\prod_{1 \leq i \leq t} m_{i}^{k_{i}\ell}\,
\prod_{1 \leq i \leq t} \left( k_{i} \ell/e \right)^{k_{i} \ell} \sqrt{ 2 \pi k_{i} \ell}}
\]
and this implies that 
\[
|g^{S}| > c\, d^{\ell} \, \ell^{( a - \sum k_{i}) \ell} \,
\frac{\sqrt{ 2 \pi a \ell}}{\prod_{1 \leq i \leq t} \sqrt{ 2 \pi k_{i} \ell}}
\]
for a suitably chosen constant $d > 0$. The result now follows easily.
\end{proof}

The following lemma will eliminate the possibility of primitive actions in the proof
of Theorem \ref{T:main}.

\begin{Lem} \label{L:prim}
For each $\varepsilon > 0$, there exists an integer $\ell_{\varepsilon}$ such that if
$\ell \geq \ell_{\varepsilon}$ and $H < S = \Sym(a \ell)$ is a primitive subgroup which does not contain 
$\Alt(a\ell)$, then $|\, g^{S} \cap H\,|/|\, g^{S}\,| < \varepsilon$.
\end{Lem}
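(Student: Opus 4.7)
The plan is to combine the Praeger--Saxl bound from Theorem~\ref{T:ps} with the lower bound on $|g^S|$ provided by Lemma~\ref{L:conj}. Since $H < S = \Sym(a\ell)$ is primitive and does not contain $\Alt(a\ell)$, Theorem~\ref{T:ps} yields $|H| < 4^{a\ell}$, so the trivial estimate $|g^S \cap H| \leq |H|$ gives a purely exponential upper bound
\[
|g^S \cap H| < 4^{a\ell}.
\]

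On the other hand, Lemma~\ref{L:conj} supplies positive constants $r$ and $s$, depending only on the cycle parameters $a, k_1, \ldots, k_t, m_1, \ldots, m_t$ and hence independent of $\ell$, such that
\[
|g^S| > r\, s^\ell\, \ell^{\,\ell}.
\]
Dividing, one obtains
\[
\frac{|g^S \cap H|}{|g^S|} < \frac{4^{a\ell}}{r\, s^\ell\, \ell^{\,\ell}} = \frac{1}{r}\left(\frac{C}{\ell}\right)^{\!\ell},
\]
where $C = 4^a/s$ is a positive constant independent of $\ell$. Once $\ell \geq 2C$ we have $(C/\ell)^\ell \leq 2^{-\ell}$, so the right-hand side tends to $0$ as $\ell \to \infty$, and choosing $\ell_{\varepsilon}$ so that $r^{-1}(C/\ell)^\ell < \varepsilon$ for all $\ell \geq \ell_{\varepsilon}$ completes the argument.

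The heart of the matter is the observation that the super-exponential factor $\ell^{\,\ell}$ in the lower bound on $|g^S|$ easily swallows any merely exponential upper bound on $|H|$. No further structural information about primitive subgroups is needed beyond Theorem~\ref{T:ps}; in that sense there is no real obstacle, since the entire difficulty has already been offloaded into Lemma~\ref{L:conj} (which rests on Stirling's approximation applied to the conjugacy class size formula~(\ref{E:conj})) and into the Praeger--Saxl bound.
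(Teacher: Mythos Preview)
Your proof is correct and follows essentially the same approach as the paper: bound $|g^{S}\cap H|\le |H|<4^{a\ell}$ via Theorem~\ref{T:ps}, bound $|g^{S}|$ below via Lemma~\ref{L:conj}, and observe that the resulting ratio $4^{a\ell}/(r\,s^{\ell}\ell^{\ell})$ tends to~$0$. The paper's proof is terser but the argument is identical.
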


\begin{proof}
Suppose that $\ell \gg a$ and that $H < S = \Sym(a\ell)$ is a primitive subgroup which does not 
contain $\Alt(a\ell)$. Applying Lemma \ref{L:conj} and Theorem \ref{T:ps}, there exist constants 
$r$, $s > 0$ such that 
\[
|\, g^{S} \cap H\,|/|\, g^{S}\,| < |\, H\,|/|\, g^{S}\,| < 
\frac{4^{a\ell}}{r\,s^{\ell} \ell^{\,\ell}}
\]
and the result follows easily.
\end{proof}

Next we consider the case when $H$ is an imprimitive subgroup of $S$ preserving a maximal system of
imprimitivity of fixed block-size $d$. Of course, we can suppose that 
$H \leqslant \Sym(d) \Wr \Sym( a\ell/d) < S = \Sym(a\ell)$. The following result
is another easy consequence of Stirling's Approximation.

\begin{Lem} \label{L:wreath}
There exist constants $b$, $c > 0$ (which only depend on the parameters $a$, $d$) such that
$|\Sym(d) \Wr \Sym( a\ell/d)| < b\, c^{\,\ell}\, \ell^{a \ell/d}$.
\end{Lem}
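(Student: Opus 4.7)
The plan is to compute the order of $\Sym(d) \Wr \Sym(a\ell/d)$ explicitly as $(d!)^{a\ell/d}\,(a\ell/d)!$ and to show that both factors are dominated by the target bound $b\, c^\ell \, \ell^{a\ell/d}$, after appropriate choice of the constants $b,c$ depending only on $a$ and $d$.

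First, the ``base'' factor $(d!)^{a\ell/d}$ can be rewritten as $\bigl((d!)^{a/d}\bigr)^{\ell}$, which is purely exponential in $\ell$ and so already has the form $\alpha^{\ell}$ for $\alpha = (d!)^{a/d}$ depending only on $a,d$. So this contributes only to the constant $c$.

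Next, for the ``top'' factor $(a\ell/d)!$, I would apply the upper half of the Stirling approximation (\ref{E:stirling}) with $n = a\ell/d$ to obtain
\[
(a\ell/d)! \;\leq\; \frac{e}{\sqrt{2\pi}}\sqrt{2\pi a\ell/d}\;\Bigl(\frac{a\ell}{de}\Bigr)^{a\ell/d}
\;=\; C_{1}\sqrt{\ell}\;\Bigl(\frac{a}{de}\Bigr)^{a\ell/d}\ell^{\,a\ell/d},
\]
where $C_{1}$ depends only on $a,d$. The quantity $(a/(de))^{a\ell/d}$ is again of the form $\beta^{\ell}$ with $\beta = (a/(de))^{a/d}$ depending only on $a,d$, so it may be absorbed into an exponential factor.

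Multiplying the two bounds gives an estimate of the shape $C_{1}\sqrt{\ell}\,(\alpha\beta)^{\ell}\,\ell^{\,a\ell/d}$. The only non-routine step is the cosmetic one of getting rid of the $\sqrt{\ell}$ prefactor: since $\sqrt{\ell} \leq (1+\varepsilon)^{\ell}$ for every $\varepsilon>0$ and all sufficiently large $\ell$, we may absorb it into the exponential by replacing $c := \alpha\beta$ with, say, $c := 2\alpha\beta$, at the cost of enlarging $b$ to handle the small values of $\ell$. There is no substantive obstacle here; the lemma is a direct bookkeeping exercise built on the standard Stirling bound (\ref{E:stirling}) already recorded in this section.
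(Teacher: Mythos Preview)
Your proof is correct and is exactly the routine Stirling computation the paper has in mind; the paper itself offers no argument beyond the remark that the lemma is ``another easy consequence of Stirling's Approximation,'' and your write-up fills in precisely those details. The absorption of the $\sqrt{\ell}$ factor into the exponential is handled correctly.
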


In this case, we can only show that $|\, g^{S} \cap H\,|/|\, g^{S}\,|$ is small for those elements 
$g \in \Sym(a)$ such that $a/d < a - \sum k_{i}$. Fortunately, clause (ii) of the following lemma will guarantee 
the existence of a ``suitable such'' element during the proof of Theorem \ref{T:main}.

\begin{Lem} \label{L:imp}
For each $d \geq 2$ and $\varepsilon > 0$, there exists an integer $\ell_{d,\varepsilon}$ such that if
$\ell \geq \ell_{d,\varepsilon}$ and $H < S = \Sym(a\ell)$ is an imprimitive subgroup with a {\em maximal\/}
system of imprimitivity $\mathcal{B}$ of blocksize $d$, then either:
\begin{enumerate}
\item[(i)] $|\, g^{S} \cap H\,|/|\, g^{S}\,| < \varepsilon$; or
\item[(ii)] the induced action of $H$ on $\mathcal{B}$ contains $\Alt(\mathcal{B})$.
\end{enumerate}
\end{Lem}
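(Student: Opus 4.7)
The plan is to establish the dichotomy by assuming that clause (ii) fails and deriving clause (i). Suppose $H < S = \Sym(a\ell)$ preserves the maximal system of imprimitivity $\mathcal{B}$ of blocksize $d$, and that the induced action $\bar{H} \leqslant \Sym(\mathcal{B})$ does not contain $\Alt(\mathcal{B})$. The first step is to observe that the maximality of $\mathcal{B}$ forces $\bar{H}$ to act primitively on $\mathcal{B}$: any nontrivial block system for $\bar{H}$ would pull back to a block system on $\{1,\ldots,a\ell\}$ strictly coarser than $\mathcal{B}$, contradicting the maximality hypothesis.

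Given this primitivity, I would invoke the Praeger--Saxl theorem (Theorem \ref{T:ps}) to conclude that $|\bar{H}| < 4^{|\mathcal{B}|} = 4^{a\ell/d}$. The kernel of the quotient map $H \to \bar{H}$ is contained in the base group $\Sym(d)^{a\ell/d}$ of the wreath product $\Sym(d) \Wr \Sym(\mathcal{B})$, so its order is at most $(d!)^{a\ell/d}$. The first isomorphism theorem then yields
\[
|H| \leq (4 \cdot d!)^{a\ell/d} = C^{\ell}
\]
for a constant $C = (4 \cdot d!)^{a/d}$ depending only on $a$ and $d$. The crucial feature here is that the superexponential factor $\ell^{a\ell/d}$ appearing in the crude bound of Lemma \ref{L:wreath} has been eliminated; this is precisely what the Praeger--Saxl input buys us over the naive wreath-product estimate, and is the reason the extra hypothesis in the remark before this lemma is not needed once clause (ii) is ruled out.

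To finish, I would combine this with the trivial inequality $|g^{S} \cap H|/|g^{S}| \leq |H|/|g^{S}|$ and the lower bound of Lemma \ref{L:conj}: since $g \neq 1$ in $\Sym(a)$, we have $a - \sum k_{i} \geq 1$, so $|g^{S}| > r\,s^{\ell}\,\ell^{\,\ell}$ for suitable constants $r,s > 0$ depending only on the cycle-type parameters of $g$. Hence
\[
\frac{|g^{S} \cap H|}{|g^{S}|} < \frac{C^{\ell}}{r\,s^{\ell}\,\ell^{\,\ell}} \longrightarrow 0
\]
as $\ell \to \infty$, and any $\ell_{d,\varepsilon}$ large enough to bring the right-hand side below $\varepsilon$ will suffice. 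I do not anticipate any genuine obstacle: the only subtle point is recognising that the maximality hypothesis supplies primitivity of $\bar{H}$ for free, which is exactly what is required to apply Praeger--Saxl; after that the argument is a direct parallel of Lemma \ref{L:prim}, with the extra bookkeeping for the base group of the wreath product contributing only a benign exponential factor.
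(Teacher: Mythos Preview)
Your proposal is correct and follows essentially the same approach as the paper: assume clause (ii) fails, use maximality of $\mathcal{B}$ to get primitivity of the induced action, apply Praeger--Saxl to bound $|\bar H|$, absorb the base group to get $|H| \leq c^{\ell}$, and finish via $|H|/|g^{S}|$ and Lemma~\ref{L:conj} exactly as in Lemma~\ref{L:prim}. Your remark that Praeger--Saxl kills the $\ell^{a\ell/d}$ factor, so no hypothesis on $a - \sum k_i$ is needed here, matches the paper's reasoning precisely.
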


\begin{proof}
Suppose that $\ell \gg a$ and that $H < S = \Sym(a\ell)$ is an imprimitive subgroup with a maximal
system of imprimitivity $\mathcal{B}$ of blocksize $d$. Let $\Gamma \leqslant \Sym(\mathcal{B})$ be the
group induced by the action of $H$ on $\mathcal{B}$ and suppose that $\Gamma$ does not contain
$\Alt(\mathcal{B})$. Since $\mathcal{B}$ is a maximal system of imprimitivity, it follows that
$\Gamma$ is a primitive subgroup of $\Sym(\mathcal{B})$; and hence by Theorem \ref{T:ps}, we obtain
that $|\,\Gamma\,| < 4^{a\ell/d}$. Since $H$ is isomorphic to a subgroup of 
$\Sym(d) \Wr \Gamma$, it follows that
\[
|\,H\,| < (\,d!\,)^{a\ell/d} 4^{a\ell/d} = c^{\,\ell},
\]   
where $c = (\, d!\,4\,)^{a/d}$. Arguing as in the proof of Lemma \ref{L:prim}, the result
follows easily.
\end{proof}

\section{Imprimitive and intransitive actions} \label{S:prob}

In this section, we will continue to fix an element $1 \neq g \in \Sym(a)$. Suppose that 
the cycle decomposition of $g \in \Sym(a)$ has $k$ nontrivial cycles. Let $\ell \gg a$ and let 
$\varphi: \Sym(a) \to S = \Sym(a \ell)$ be an $\ell$-fold diagonal embedding. Then identifying $g$ with $\varphi(g)$, 
the cyclic decomposition of $g \in S$ has $k \ell$ nontrivial cycles. In this section, we will consider the 
value of the normalized permutation character for the action $S \curvearrowright S/H$ as $\ell \to \infty$ in the 
following two cases.
\begin{enumerate}
\item[(i)] There exists an $H$-invariant subset $U \subset [a\ell]$ of {\em fixed\/} cardinality $r \geq 0$ such
that $H$ acts imprimitively on $T = [a\ell] \smallsetminus U$ with a proper system of imprimitivity $\mathcal{B}$ of 
blocksize $d$ with $d \to \infty$ as $\ell \to \infty$.
\item[(ii)] $H$ is an intransitive subgroup of $S$ with an $H$-invariant subset $U \subset [a\ell]$
of cardinality $r = |U| \leq a\ell/2$ such that $r \to \infty$ as $\ell \to \infty$.
\end{enumerate}

Our approach is this section will be probabilistic; i.e. we will regard the normalized permutation character 
$|\, \{ s \in S \mid s g s^{-1} \in H \,\}|/|S|$ as the probability that a uniformly random permutation
$s \in S$ satisfies $s g s^{-1} \in H$. Our probability theoretic notation is standard. In particular, 
if $E$ is an event, then $\mathbb{P}\,[E]$ denotes the corresponding probability and $1_{E}$
denotes the indicator function; and if $N$ is a random variable, 
then $\mathbb{E}\,[N]$ denotes the expectation, $\Var [N]$ denotes the variance and $\sigma = (\Var [N])^{1/2}$
denotes the standard deviation. We will make use of the following easy consequence of Chebyshev's inequality.

\begin{Lem} \label{L:bigO} 
Let $(N_{\ell})$ be a sequence of non-negative random variables such that
$\mathbb{E}\, [N_{\ell} ] = \mu _{\ell} >0$ and $\Var [N_{\ell}]=\sigma _{\ell}^2 >0$.
If $\lim _{\ell \to \infty} \mu _{\ell}/\sigma _{\ell}  =\infty$, then 
$\mathbb{P}\, [ N_{\ell} > 0 ] \to 1$ as $\ell \to \infty$.
\end{Lem}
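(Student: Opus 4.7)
The plan is to use the standard ``second moment'' trick: bound the event $\{N_\ell = 0\}$ in terms of the deviation of $N_\ell$ from its mean, and then apply Chebyshev's inequality.

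First I would note that, because $N_\ell$ is non-negative and $\mu_\ell > 0$, the event $\{N_\ell = 0\}$ is contained in the event $\{|N_\ell - \mu_\ell| \geq \mu_\ell\}$; indeed, $N_\ell = 0$ forces $N_\ell - \mu_\ell = -\mu_\ell$, which has absolute value exactly $\mu_\ell$.

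Next I would invoke Chebyshev's inequality to obtain
\[
\mathbb{P}\,[N_\ell = 0] \;\leq\; \mathbb{P}\,\bigl[\,|N_\ell - \mu_\ell| \geq \mu_\ell\,\bigr] \;\leq\; \frac{\sigma_\ell^{2}}{\mu_\ell^{2}} \;=\; \left(\frac{\sigma_\ell}{\mu_\ell}\right)^{2}.
\]
By hypothesis $\mu_\ell/\sigma_\ell \to \infty$, so the right-hand side tends to $0$. Consequently $\mathbb{P}\,[N_\ell > 0] = 1 - \mathbb{P}\,[N_\ell = 0] \to 1$, as required.

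There is no substantive obstacle: the statement is a textbook application of Chebyshev's inequality (sometimes called the second-moment method), and the only point that deserves a moment's care is the initial non-negativity observation which converts a ``probability of being zero'' question into a ``probability of a large deviation'' question. The lemma is stated here purely to fix notation and package the estimate for repeated use in the subsequent probabilistic arguments.
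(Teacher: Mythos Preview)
Your proof is correct and follows essentially the same approach as the paper: both arguments are direct applications of Chebyshev's inequality. The only cosmetic difference is that the paper introduces an intermediate threshold $L_\ell = \mu_\ell - (\mu_\ell\sigma_\ell)^{1/2}$ and shows $\mathbb{P}[N_\ell > L_\ell] \to 1$ with $L_\ell$ eventually positive, whereas you go straight to bounding $\mathbb{P}[N_\ell = 0]$ via the inclusion $\{N_\ell = 0\} \subseteq \{|N_\ell - \mu_\ell| \geq \mu_\ell\}$; your route is in fact slightly cleaner.
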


\begin{proof}
Let $K_{m} =(\mu _{\ell} / \sigma _{\ell} ) ^{1/2}$ and let $L_{\ell} = \mu_{\ell} - K_{\ell} \sigma _{\ell}$. 
By Chebyshev's inequality, 
\[
\mathbb{P}\, [N_{\ell} > L_{\ell} ] \geq 1-\frac{1}{K_{\ell}^2} 
\]
and so $\mathbb{P}\, [N_{\ell} > L_{\ell} ] \to 1$ as $\ell \to \infty$.
In addition, for all sufficiently large $\ell$, we have that $K_{\ell} > 1$ and hence 
$L_{\ell} = (\mu _{\ell} \sigma _{\ell} )^{1/2}(K_{\ell} - 1 ) >0$.
\end{proof}

In our arguments, it will be convenient to make use of big O notation. Recall that if $(a_{m} )$ and $(x_{m})$ 
are sequences of real numbers, then $a_{m} = O(x_{m})$ means that there exists a constant $C>0$ and 
an integer $m_{0}\in \mathbb{N}$ such that $|a_{m}| \leq C|x_{m}|$ for all $m\geq m_{0}$. Also if
$(c_{m})$ is another sequence of real numbers, then we write $a_{m} = c_{m} + O(x_{m})$ to mean that 
$a_{m}-c_{m} = O(x_{m})$.

\begin{Lem} \label{L:block}
For each $r \geq 0$ and $\varepsilon > 0$, there exists an integer $d_{r,\varepsilon}$ such that if
$d \geq d_{r,\varepsilon}$ and $H < S = \Sym(a\ell)$ is a subgroup such that:
\begin{enumerate}
\item[(i)] there exists an $H$-invariant subset $U \subset [a\ell]$ of cardinality $r$, and
\item[(ii)] $H$ acts imprimitively on $T = [a\ell] \smallsetminus U$ with a proper
system of imprimitivity $\mathcal{B}$ of blocksize $d$,
\end{enumerate}
then 
$|\{\, s \in S \mid s gs ^{-1} \in H \,\} |/|S| < \varepsilon$.
\end{Lem}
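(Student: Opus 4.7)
The plan is a second-moment argument via Lemma \ref{L:bigO}. Pick $s \in S$ uniformly, so that $\pi = sgs^{-1}$ is uniform in $g^S$. For each ordered pair of distinct $x, y \in T$ lying in a common block of $\mathcal{B}$, set $X_{x,y} = 1$ exactly when $\{\pi(x), \pi(y)\}$ fails to be contained in a single block of $\mathcal{B}$ (in particular, $X_{x,y} = 1$ whenever $\pi(x)$ or $\pi(y)$ lies in $U$), and let $N = \sum_{(x,y)} X_{x,y}$. If $\pi \in H$ then $\pi$ preserves both $U$ and $\mathcal{B}$ setwise, so $N = 0$; thus $\mathbb{P}[\pi \in H] \leq \mathbb{P}[N = 0]$. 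Since $\mathcal{B}$ is a proper system of imprimitivity on $T$, we have $d \leq (a\ell - r)/2$ and hence $\ell \to \infty$ as $d \to \infty$; it therefore suffices to show that $\mathbb{P}[N = 0] \to 0$ in this regime.

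For the first moment, reparametrize via $(u, v) = (s^{-1}(x), s^{-1}(y))$, which is uniform over ordered pairs of distinct elements of $[a\ell]$. Split into the cases (i) $g$ fixes both $u$ and $v$ (probability at most $(f_0/a)^2$, where $f_0$ denotes the fixed-point count of $g$ in $\Sym(a)$), (ii) $\{u, v\}$ is a $2$-cycle of $g$ (probability $O(1/\ell)$), and (iii) all other configurations. In cases (i) and (ii) the image pair $\{\pi(x), \pi(y)\}$ equals $\{x, y\}$ and hence lies in one block. In case (iii), at least one of the images $\pi(x) = s(g(u))$ and $\pi(y) = s(g(v))$ is uniformly distributed over $[a\ell] \setminus \{x, y\}$; since $d \leq (a\ell - r)/2$, a short computation then shows that the conditional probability of $\{\pi(x), \pi(y)\}$ straddling two blocks is at least $1/2 - O(1/\ell)$. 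Since $g \neq 1$ forces $f_0 \leq a - 2$, we obtain $\mathbb{P}[X_{x,y} = 1] \geq c - O(1/\ell)$ for the constant $c = (1 - (f_0/a)^2)/2 > 0$. Summing over the $(a\ell - r)(d - 1)$ same-block ordered pairs in $T$ yields $\mathbb{E}[N] \geq (c/2)(a\ell - r)(d - 1)$ for $\ell$ large.

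For the second moment, expand $\mathrm{Var}[N]$ as a double sum of covariances. Overlapping pair-pairs (with $\{x,y\} \cap \{x',y'\} \neq \emptyset$) contribute at most $1$ each and number $O(\ell d^2)$. For disjoint quadruples, the analogous $4$-point reparametrization makes $(u, v, u', v')$ a uniform ordered $4$-tuple of distinct elements of $[a\ell]$; a parallel case analysis shows that, apart from an event of probability $O(1/\ell)$ arising from the $O(\ell)$ nontrivial cycles of $g$ in $S$, the $4$-tuple $(\pi(x), \pi(y), \pi(x'), \pi(y'))$ is essentially uniformly distributed over ordered $4$-tuples of distinct elements of $[a\ell] \setminus \{x, y, x', y'\}$, yielding $\mathrm{Cov}(X_{x,y}, X_{x',y'}) = O(1/\ell)$. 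Summed over the $O((\ell d)^2)$ disjoint pair-pairs, this contributes $O(\ell d^2)$, so $\mathrm{Var}[N] = O(\ell d^2)$ overall. Hence $\mathbb{E}[N]^2/\mathrm{Var}[N] \gtrsim \ell \to \infty$, and Lemma \ref{L:bigO} gives $\mathbb{P}[N > 0] \to 1$, completing the proof. The principal obstacle is the uniform $O(1/\ell)$ covariance bound for disjoint quadruples: whereas the first-moment case split is nearly immediate, the variance estimate requires a careful accounting of the ways in which the preimage $4$-tuple $(u, v, u', v')$ can interact with the nontrivial cycles of $g$ in $S$, all of which must contribute at the required order.
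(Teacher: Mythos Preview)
Your approach is sound and also a second-moment argument via Lemma \ref{L:bigO}, but it differs substantially from the paper's in the choice of counting random variable. You sum over \emph{all} same-block ordered pairs $(x,y)$ in $T$ and track whether $\pi$ separates them, obtaining $\mathbb{E}[N]\asymp \ell d$ and $\Var[N]=O(\ell d^{2})$, so that $\mathbb{E}[N]/\sigma\gtrsim\sqrt{\ell}$. The paper instead fixes a single anchor $z_{0}\in Z$ (one representative from each nontrivial cycle of $g$), lets $B_{0},C_{0}$ be the blocks of $s(z_{0}),s(y_{0})$, and sets $J(s)=\{\,z\in Z\smallsetminus\{z_{0}\}: s(z)\in B_{0},\ s(g(z))\notin C_{0}\,\}$; a nonempty $J(s)$ already forces $sgs^{-1}(B_{0})$ to straddle two blocks. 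This anchoring gives $\mathbb{E}[|J(s)|]=cd(1-d/m)+O(1)$ and $\Var(|J(s)|)=O(d)$, hence $\mu/\sigma\gtrsim\sqrt{d}$. The paper's bound is directly in the parameter $d$ that the lemma quantifies over, whereas yours passes through the observation $d\to\infty\Rightarrow\ell\to\infty$; both suffice. More importantly, because every $z\in Z$ is moved by $g$, the paper's indicators never see fixed points, and the conditioning on $[C_{0}=B_{0}]$ versus $[C_{0}\neq B_{0}]$ reduces the second-moment computation to two explicit closed-form products, avoiding entirely the $4$-tuple case analysis you correctly identify as your principal obstacle. Your covariance bound $O(1/\ell)$ for disjoint quadruples is indeed obtainable by conditioning on which of $u,v,u',v'$ lie in $\Fix(g)$ and then bounding the remaining cycle-coincidence events, but it is genuinely more work than the paper's route; the anchor trick is what buys the simplification.
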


\begin{proof}
Let $m = a\ell$ and let $Z \subset [m]$ be a subset which contains one element from every non-trivial cycle of $g$. 
Then $|Z| = cm$ where $0< c = k/a \leq 1/2$ is a fraction which is independent of $\ell$. Let $Y=g(Z)$ so that 
$Z\cap Y=\emptyset$. Fix an element $z_{0} \in Z$ and let $y_{0} = g(z_{0})$. Let $s \in S$ be a uniformly 
random permutation. If $s(z_{0})$,$s(y_{0}) \in T$, let $B_0$, $C_0 \in \mathcal{B}$ be the blocks 
in $\mathcal{B}$ containing $s(z_{0})$ and $s(y_{0})$ respectively; otherwise, let 
$B_{0} = C_{0} = \emptyset$. Let $E$ be the event that either $s(z_{0}) \notin T$ or $s(y_{0}) \notin T$.
Then clearly $\mathbb{P}\, [\,E\,] \leq 2r/m$. Let 
\[
J(s) = \{ z \in Z \smallsetminus \{ z_{0}\} \mid s(z)\in B_{0} \text{ and } s(g(z))\notin C_{0} \}.
\]
Note that if $J(s) \neq \emptyset$, then $s g s^{-1}(B_{0})$ intersects at least two of the blocks of
$\mathcal{B}$ and thus $s g s^{-1} \notin H$. Hence it suffices to show that
\begin{equation} \label{E:J>0}
\mathbb{P}\, [\, |J(s)| >0 \,] \to 1 \quad \text{ as } d\to \infty.
\end{equation}

Since we will be using Lemma \ref{L:bigO}, we need to compute the asymptotics of the expectation and variance of
the random variable 
\[
|J(s)| = \sum _{z \in Z\setminus \{ z_0 \}} 1_{[ s(z) \in B_{0}, \ s(g(z)) \notin C_{0} ]}.
\]
We will often implicitly use that $0<\tfrac{d}{n} \leq \tfrac{1}{2}$, and that 
$r=O(1)$, $\tfrac{d}{n} = O(1)$ and $\tfrac{n-d}{n} = O(1)$.
To compute both the expectation $\mathbb{E}\, [\,|J(s) |\,]$ and the second moment
$\mathbb{E}\, [\,|J(s)|^{2}\,]$ , we will separately condition on the events $E$,
$[\,C_{0} = B_{0}\,] \smallsetminus E$ and $[\,C_{0} \neq B_{0}\,] \smallsetminus E$. 
On $E$ the expectation is $0$, and otherwise we have that
\begin{align*}
\mathbb{E}\, \big[\, |J(s)|\, \big|\, [C_{0}=B_{0}] \smallsetminus E \,\big] 
&=\textstyle\sum _{z\in Z\smallsetminus \{ z_{0} \} }\mathbb{P}\, \big[\, s(z)\in B_{0}, 
\ s(g(z))\notin B_{0} \,\big|\, s(y_{0})\in B_{0}\, \big] \\
&= (cm-1)\frac{(d-2)(m-d)}{(m-2)(m-3)} \\
&= cd(1-\tfrac{d}{m}) + O(1) 
\end{align*}
and that
\begin{align*}
\mathbb{E}\, \big[\, |J(s)| \,\big|\, [C_{0}\neq B_{0}] \smallsetminus E \,\big] 
&= \textstyle \sum _{z\in Z\smallsetminus \{ z_0\} } 
\mathbb{P}\, \big[\, s(z)\in B_{0}, \ s(g(z))\notin C_{0} \,\big|\, s(y_{0})\notin B_{0}\, \big] \\
&= (cm-1)\frac{(d-1)(m-(d-2))}{(m-2)(m-3)} \\
&= cd(1-\tfrac{d}{m}) + O(1).
\end{align*}
Since $\mathbb{P}\, [\,E\,]\leq 2r/m$, we have that 
$cd(1-\tfrac{d}{m})(1- \mathbb{P}\, [\,E\,] ) = cd(1-\tfrac{d}{m}) + O(1)$ and hence we obtain that
\begin{equation}\label{E:expect}
\mathbb{E}\, [\,|J(s)|\,] = cd(1-\tfrac{d}{m}) + O(1 ) 
\end{equation}
and that
\begin{equation} \label{E:expect-square}
\mathbb{E}\, [\,|J(s)|\,]^2 = [cd(1-\tfrac{d}{m})]^2 + O(d),
\end{equation}
where in the second equality we use the fact that $\mathbb{E}\, [\,|J(s)|\, ] = O(d)$. 
For the second moment, we have that
\begin{align*}
\mathbb{E}\, &\big[\, |J(s)|^2\, \big|\, [C_{0}=B_{0}] \smallsetminus E \,\big] -
\mathbb{E}\, \big[\, |J(s)| \,\big|\, [C_{0}=B_{0}] \smallsetminus E \,\big] \\
&= \sum _{w\neq z\in Z\smallsetminus \{ z_{0}\} }\mathbb{P}\,\big[\, s(w),  s(z)\in B_{0},
 \ s(g(w)), s(g(z))\notin B_{0}  \,\big|\, s(y_{0})\in B_{0} \,\big] \\
&= (cm-1)(cm-2)\frac{(d-2)(d-3)(m-d)(m-(d-1))}{(m-2)(m-3)(m-4)(m-5)} \\
&= [cd(1-\tfrac{d}{m} ) ] ^2 + O(d) 
\end{align*}
and
\begin{align*}
\mathbb{E}\, &\big[\, |J(s)|^2 \,\big|\, [C_{0}\neq B_{0}] \smallsetminus E \,\big] -
\mathbb{E}\, \big[\, |J(s)|\, \big|\, [C_{0}\neq B_{0}] \smallsetminus E \,\big]\\ 
&= \sum _{w\neq z\in Z\smallsetminus \{ z_{0}\} }\mathbb{P}\,\big[\, s(w), s(z)\in B_{0}, 
\ s(g(w)), s(g(z))\notin C_{0}\,  \big|\, s(y_{0})\notin B_{0} \,\big] \\
&= (cm-1)(cm-2)\frac{(d-1)(d-2)(m-(d-1))(m-(d-2))}{(m-2)(m-3)(m-4)(m-5)} \\
&=[cd(1-\tfrac{d}{m} ) ] ^2 + O(d). 
\end{align*}
Since $[cd(1-\tfrac{d}{n})]^2(1-\mathbb{P}\, [E] ) = [cd(1-\tfrac{d}{n})]^2 + O(d)$, it follows that
\begin{equation}\label{E:Var}
\mathbb{E}\, [\,|J(s)|^2 \,] = \big[ cd (1-\tfrac{d}{m}) \big]^2  + O(d).
\end{equation}
Combining (\ref{E:expect-square}) and (\ref{E:Var}) we obtain that
\[ 
\Var(|J(s)|) = \mathbb{E}\, [\,|J(s)|^2\,]-\mathbb{E}\, [\,|J(s)|\,]^2 = O(d),
\] 
and hence $\Var(|J(s)|)^{1/2} = O(\sqrt{d} )$. Of course, (\ref{E:expect}) implies that 
$d = O( \mathbb{E}\,[\,|J(s)|\,])$. Thus there exists a constant $C > 0$ such that
$\sigma = \Var(|J(s)|)^{1/2} \leq C \sqrt{d}$ and $d \leq C\, \mathbb{E}\,[\,|J(s)|\,]) = C \mu$
for all sufficiently large $d$. It follows that 
\[
\mu/\sigma \geq C^{-1}d/C \sqrt{d} = C^{-2} \sqrt{d} \to \infty \quad \quad 
\text{ as } d \to \infty.
\]
Applying Lemma \ref{L:bigO}, we conclude that $\mathbb{P}\, [\,|J(s)|>0 \,] \to 1$ as $d \to \infty$, 
which proves (\ref{E:J>0}). This completes the proof of Lemma \ref{L:block}.
\end{proof}

\begin{Lem} \label{L:int}
For each $\varepsilon > 0$, there exists an integer $r_{\varepsilon}$ such that if
$r \geq r_{\varepsilon}$ and $H < S = \Sym(a\ell)$ is an intransitive subgroup with
an $H$-invariant subset $U\subset [a\ell]$ such that $r = |U|\leq a\ell/2$, then
$|\{\, s \in S \mid s g s^{-1} \in H \,\} |/|S| < \varepsilon$.
\end{Lem}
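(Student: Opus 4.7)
\emph{Proof strategy.} The plan is to adapt the probabilistic argument of Lemma \ref{L:block}, with the fixed $H$-invariant set $U$ playing the role that the block $B_{0}$ played there. By Proposition \ref{P:conj}, it suffices to show that for a uniformly random $s \in S$ the probability that $sgs^{-1} \in H$ is less than $\varepsilon$ once $r$ is large. The key observation is that since $H$ preserves $U$, the event $[\,sgs^{-1} \in H\,]$ forces $sgs^{-1}$ to preserve $U$. To detect failure of this preservation, let $m = a\ell$, fix a transversal $Z \subset [m]$ for the nontrivial cycles of $g$ (so $|Z| = k\ell = cm$ with $c = k/a$), and define
\[
J(s) = \{\, z \in Z \mid s(z) \in U \text{ and } s(g(z)) \notin U\,\}.
\]
If $|J(s)| > 0$, witnessed by some $z$, then $sgs^{-1}$ sends $s(z) \in U$ to $s(g(z)) \notin U$, so $sgs^{-1} \notin H$. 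The task therefore reduces to showing that $\mathbb{P}\,[\,|J(s)|>0\,] \to 1$ as $r \to \infty$, uniformly in $\ell$ subject to $r \leq m/2$; for this I would invoke Lemma \ref{L:bigO}, exactly as in Lemma \ref{L:block}.

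Writing $|J(s)| = \sum_{z \in Z} X_{z}$ with $X_{z} = 1_{[\,s(z) \in U,\, s(g(z)) \notin U\,]}$, each $X_{z}$ has mean $p = r(m-r)/[m(m-1)]$, so the constraint $r \leq m/2$ yields
\[
\mu := \mathbb{E}\,[\,|J(s)|\,] = |Z|\,p = \frac{cr(m-r)}{m-1} \geq \frac{cr}{2},
\]
which tends to infinity with $r$. For the variance, since distinct $z, w \in Z$ lie in disjoint nontrivial $g$-cycles, the four points $z, g(z), w, g(w)$ are distinct, and $\mathbb{E}\,[X_{z} X_{w}]$ is given by an explicit product of hypergeometric-type fractions. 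A direct expansion should give $|\operatorname{Cov}(X_{z}, X_{w})| = O(p/m)$; combined with $\sum_{z} \Var(X_{z}) \leq \mu$, this would produce $\Var(|J(s)|) = O(\mu)$, so that $\mu/\sigma = \Omega(\sqrt{\mu}) = \Omega(\sqrt{r}) \to \infty$, and Lemma \ref{L:bigO} closes the argument.

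The main obstacle I anticipate is the uniform covariance bound $|\operatorname{Cov}(X_{z}, X_{w})| = O(p/m)$ across the full range $r \in [1, m/2]$. The difficulty is that $p$ varies from roughly $r/m$ (for small $r$) to $\approx 1/4$ (for $r$ close to $m/2$), and the leading-order terms in $\mathbb{E}\,[X_{z} X_{w}] - p^{2}$ behave differently in these two regimes; in fact a short computation shows they cancel exactly at $r = m/2$, which is reassuring but means the estimate has to be done carefully. Once this uniform covariance estimate is verified, the remaining steps — the second-moment reduction and the application of Chebyshev via Lemma \ref{L:bigO} — are direct transcriptions of their counterparts in the proof of Lemma \ref{L:block}.
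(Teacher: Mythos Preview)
Your proposal is correct and follows essentially the same route as the paper: define the same witness set (the paper calls it $I(s)$, you call it $J(s)$), observe that $|J(s)|>0$ forces $sgs^{-1}\notin H$, and then apply the second-moment method via Lemma~\ref{L:bigO} to show $\mathbb{P}[\,|J(s)|>0\,]\to 1$ as $r\to\infty$. The paper simply asserts $\mathbb{E}[|I(s)|]=cr(1-r/m)+O(1)$ and $\mathbb{E}[|I(s)|^2]=[cr(1-r/m)]^2+O(r)$ by analogy with Lemma~\ref{L:block}, which is equivalent to your covariance bound $\Var(|J(s)|)=O(\mu)$; the computation you flag as the main obstacle does go through uniformly in the range $r\leq m/2$, and in fact is somewhat simpler than its counterpart in Lemma~\ref{L:block} since no conditioning on an auxiliary event $E$ is needed here.
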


\begin{proof}
The proof is similar to that of Lemma \ref{L:block}, although the computations are simpler. 
Let $m = a \ell$ and let $Z$, $Y$, and $c$ be as in Lemma \ref{L:block}. Fix some 
$H$-invariant $U\subseteq [m]$ with $|U|=r$. Let $s \in S$ be a uniformly random permutation and let
\[ 
I(s) = \{\, z\in Z \mid s(z) \in U \text{ and } s(g(z)) \notin U \,\}.
\] 
If $I(s) \neq \emptyset$, then $U$ is not $s g s^{-1}$-invariant and thus $s g\ s^{-1} \notin H$. 
Hence it suffices to show that
\begin{equation}\label{eqn:I>0}
\mathbb{P}\, [\,|I(s)|>0 \,] \to 1 \text{ as } r \to \infty.
\end{equation}
Computations similiar to those in the proof of Lemma \ref{L:block} show that
\begin{equation} \label{E:similar} 
\mathbb{E}\, [\,|I(s)|\,] = cr (1-\tfrac{r}{m}) + O(1) \text { and }
\mathbb{E}\, [\,|I(s)|^2\,] = [cr(1-\tfrac{r}{m})]^2 + O(r).
\end{equation}
It follows that $\Var(|I(s)|)^{1/2} = O(\sqrt{r} )$ and $r = O( \mathbb{E}\,[\,|I(s)|\,])$;
and another application of Lemma \ref{L:bigO} shows that $\mathbb{P}\, [\,|I(s)|>0 \,] \to 1$ as $r \to \infty$.
\end{proof}

\section{The proof of Theorem \ref{T:main}} \label{S:proof}

In this section, we will present the proof of Theorem \ref{T:main}. 
Let $(k_{n})$ be a sequence of natural numbers $k_{n} \geq 2$, let 
$X_{n} = \prod_{0 \leq m \leq n}[k_{m}]$, and let $G = \bigcup_{n \in \mathbb{N}}G_{n}$
be the corresponding SD$\mathcal{S}$-group with $G_{n} = \Sym(X_{n})$.
Let $\nu$ be an ergodic IRS of $G$. Then we can suppose that $\nu$ is not a 
Dirac measure $\delta_{N}$ concentrating on a normal subgroup $N \trianglelefteq G$. 
Applying Creutz-Peterson \cite[Proposition 3.3.1]{cp}, let $\nu$ be
the stabilizer distribution of the ergodic action $G \curvearrowright (\, Z, \mu\,)$
and let $\chi(g) = \mu(\, \Fix_{Z}(g) \,)$ be the corresponding character. Then, since $\nu \neq \delta_{1}$,
it follows that $\chi \neq \chi_{\reg}$, where $\chi_{\reg}$ is the {\em regular character\/} defined by
\[
\chi_{\reg}(g) = 
\begin{cases}
1 &\text{if } g = 1; \\
0 &\text{if } g \neq 1.
\end{cases}
\]
For each $z \in Z$ and 
$n \in \mathbb{N}$, let $\Omega_{n}(z) = \{\, g \cdot z \mid g \in G_{n}\,\}$.
Then, by Theorem \ref{T:point}, for $\mu$-a.e. $z \in Z$, for all $g \in G$, we have that
\begin{equation} \label{E:generic}
\mu(\, \Fix_{Z}(g) \,) = \lim_{n \to \infty} |\, \Fix_{\Omega_{n}(z)}(g)\,|/|\, \Omega_{n}(z)\,|.  
\end{equation}
Fix such an element $z \in Z$ and let $H = \{\, h \in G \mid h \cdot z = z \,\}$ be the corresponding point 
stabilizer. Clearly we can suppose that the element $z \in Z$ has been chosen so that $H$ is not 
a normal subgroup of $G$ and so that if $g \in H$, then $\chi(g) > 0$.
For each $n \in \mathbb{N}$, let $H_{n} = H \cap G_{n}$. Then, by Proposition \ref{P:conj}, for each $g \in G$, 
we have that
\[
\mu(\, \Fix_{Z}(g) \,) = \lim_{n \to \infty} |\, g^{G_{n}} \cap H_{n}\,|/|\, g^{G_{n}}\,| =
|\, \{ s \in G_{n} \mid s g s^{-1} \in H_{n} \,\}|/|G_{n}|.
\] 
We will consider the various possibilities for the action of $H_{n} \leqslant G_{n} = \Sym(X_{n})$
on the set $X_{n}$. 

\begin{Lem} \label{L:intransitive}
There exist only finitely many $n \in \mathbb{N}$ such that $H_{n}$ acts transitively on $X_{n}$.
\end{Lem}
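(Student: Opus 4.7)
The plan is to argue by contradiction. Suppose that $H_n$ acts transitively on $X_n$ for infinitely many $n$, and pass to such a subsequence; the goal is then to produce an element $1 \neq g \in H$ with $|g^{G_n} \cap H_n|/|g^{G_n}| \to 0$ along this subsequence. Since $\chi(g) > 0$ for any $g \in H$ (by the choice of $z$) while this ratio converges to $\chi(g)$ by Proposition \ref{P:conj} together with (\ref{E:generic}), this will yield the required contradiction. Before any case analysis, I would first rule out that $\Alt(X_n) \subseteq H_n$ for infinitely many $n$: each diagonal embedding $G_n \hookrightarrow G_{n+1}$ carries $\Alt(X_n)$ into $\Alt(X_{n+1})$ (replicating a cycle of length $m$ exactly $\ell$ times preserves its sign), so such an inclusion would propagate to $A(G) = \bigcup_n \Alt(X_n) \subseteq H$, and hence $H \in \{A(G), G\}$, contradicting that $\nu$ is not a Dirac measure on a normal subgroup.

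Next, refine the subsequence so that exactly one of the following holds for all $n$: (i) $H_n$ is primitive on $X_n$; (ii) $H_n$ is imprimitive on $X_n$ with a fixed maximal block-size $d$; or (iii) $H_n$ is imprimitive on $X_n$ with maximal block-size $d_n \to \infty$. Since $\nu \neq \delta_1$, fix any $1 \neq g \in H$ and let $n_0$ be such that $g \in G_{n_0}$; setting $a = |X_{n_0}|$ and $\ell_n = |X_n|/a \to \infty$, the element $g$ sits in $G_n = \Sym(a \ell_n)$ via an $\ell_n$-fold diagonal embedding, in exactly the form assumed throughout Sections \ref{S:stirling} and \ref{S:prob}. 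In Case (i), Lemma \ref{L:prim} applied to $g$ gives the decay $|g^{G_n} \cap H_n|/|g^{G_n}| \to 0$ directly. In Case (iii), Lemma \ref{L:block} applied with $r = 0$ and $U = \emptyset$ does the same, once $d_n$ is sufficiently large. Both cases are straightforward.

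Case (ii) is the main obstacle. Lemma \ref{L:imp} provides the dichotomy: either the ratio is already small (and we are done for this $g$), or $H_n$ induces on $\mathcal{B}_n$ a group containing $\Alt(\mathcal{B}_n)$. Passing to a sub-subsequence, the remaining difficulty is the second alternative. I would then fall back on the cruder bound $|H_n| \leq |\Sym(d) \Wr \Sym(\mathcal{B}_n)|$ from Lemma \ref{L:wreath}, which combined with Lemma \ref{L:conj} yields a ratio of at most $(\mathrm{const})^{\ell_n}\,\ell_n^{(\sum k_i - a(d-1)/d)\ell_n}$; this tends to $0$ whenever the chosen $g$ has cycle-density $\sum k_i / a < (d-1)/d$. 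For $d \geq 3$ this is automatic from $\sum k_i \leq a/2$, so any $1 \neq g \in H$ works. For $d = 2$ one must instead pick $g \in H$ that is not a fixed-point-free involution; such a $g$ must exist in $H$, for otherwise every non-identity element of $H$ would be a fixed-point-free involution, making $H$ (and hence every $H_n$) an abelian group, which is incompatible with $H_n$ having the non-abelian group $\Alt(\mathcal{B}_n)$ as a quotient once $|\mathcal{B}_n| = |X_n|/2 \geq 4$.
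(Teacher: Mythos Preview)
Your overall architecture matches the paper's, and Cases (i) and (iii) are handled correctly. The gap is in Case (ii). The quantity $\sum k_i$ in Lemma~\ref{L:conj} is the \emph{total} number of cycles of $g$ on $X_{n_0}$, including fixed points (this is forced by the centralizer formula~(\ref{E:conj})), so your claim ``$\sum k_i \le a/2$ for any $1\neq g$'' is false: a transposition has $\sum k_i = a-1$, and more generally any element with many fixed points has $\sum k_i$ close to $a$. Consequently the inequality $\sum k_i/a < (d-1)/d$ is \emph{not} automatic for $d\ge 3$, and your $d=2$ repair is also off: an involution that is not fixed-point-free has $\sum k_i = a - j \ge a/2$ where $j$ is the number of $2$-cycles, which is the wrong direction. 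So an arbitrary $1\neq g\in H$ does not suffice in this case, and the crude bound $|H_n|/|g^{G_n}|$ need not tend to $0$.

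What the paper does---and what your argument is missing---is to \emph{use} the second alternative of Lemma~\ref{L:imp} to manufacture a good $g$. Since the induced action of $H_m$ on $\mathcal{B}_m$ contains $\Alt(\mathcal{B}_m)$ for some $m$ in the subsequence with $a=|X_m|\gg d$, one can choose $g\in H_m$ inducing an $(a/d)$-cycle or an $(a/d-1)$-cycle on $\mathcal{B}_m$. Such a $g$ has at most $2d$ orbits on $X_m$, so $\sum k_i \le 2d$ and hence $a-\sum k_i > a/d$ once $a\gg d$; now the comparison of Lemma~\ref{L:conj} with Lemma~\ref{L:wreath} genuinely forces $|H_n|/|g^{G_n}|\to 0$ along $n\in B_d$. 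In short, the dichotomy in Lemma~\ref{L:imp} is not merely a case split to be survived: its second branch is the source of the element with low cycle density that makes the counting work.
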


\begin{proof}
Suppose, on the contrary, that $T = \{\, n \in \mathbb{N} \mid H_{n} \text{ acts transitively on } X_{n} \,\}$ 
is infinite. First consider the case when there are infinitely many $n \in T$ such that 
$H_{n}$ acts primitively on $X_{n}$. If $\Alt(X_{n}) \leqslant H_{n}$ for infinitely many $n \in T$,
then either $H = G$ or $H = A(G)$, which contradicts the fact that $H$ is not a normal
subgroup of $G$; and hence there are 
only finitely many such $n \in T$. But then Lemma \ref{L:prim} implies that 
\[
\chi(g) = \lim_{n \to \infty} |\, g^{G_{n}} \cap H_{n}\,|/|\, g^{G_{n}}\,| = 0
\]
for all $1 \neq g \in G$, which contradicts the fact that $\chi \neq \chi_{\reg}$. Thus $H_{n}$ acts imprimitively
on $X_{n}$ for all but finitely many $n \in T$. For each $d \geq 2$, let $B_{d}$ be the set
of $n \in T$ such that:
\begin{enumerate}
\item[(a)] each nontrivial proper system of imprimitivity for the action of $H_{n}$ on $X_{n}$
has blocksize at most $d$; and
\item[(b)] there exists a (necessarily maximal) system of imprimitivity $\mathcal{B}_{n}$ of blocksize $d$ 
for the action of $H_{n}$ on $X_{n}$. 
\end{enumerate}
If each $B_{d}$ is finite, then Lemma \ref{L:block} (in the case when $r = 0$) implies that
\[
\chi(g) = \lim_{n \to \infty} |\, \{ s \in G_{n} \mid s g s^{-1} \in H_{n} \,\}|/|G_{n}|= 0
\]
for all $1 \neq g \in G$, which again contradicts the fact that $\chi \neq \chi_{\reg}$. 
Thus there exists $d \geq 2$ such that $B_{d}$ is infinite. Since $\chi \neq \chi_{\reg}$,
Lemma \ref{L:imp} implies that for all but finitely many $n \in B_{d}$, the induced action
of $H_{n}$ on $\mathcal{B}_{n}$ contains $\Alt(\mathcal{B}_{n})$. Choose some $m \in B_{d}$ such
that $|X_{m}| = a \gg d$ and let $g \in H_{m}$ induce either an $a/d$-cycle or an
$(a/d - 1)$-cycle on $\mathcal{B}_{m}$ (depending upon whether $a/d$ is odd or even). 
Suppose that $g$ has a cycle decomposition as an element of $\Sym(X_{n})$ consisting of
$k_{i}$ $m_{i}$-cycles for $1 \leq i \leq t$, where each $k_{i} \geq 1$. Since $g$ has at most $2d$
orbits on $X_{m}$, it follows that $\sum k_{i} \leq 2d$ and so $a - \sum k_{i} > a/d$.
Next suppose that $n \in B_{d}$ and that $n \gg m$. Let $|X_{n}| = a\ell$. Then, applying
Lemma \ref{L:wreath}, there exist constants $b$, $c > 0$ such that
\[
|H_{n}| \leq |\Sym(d) \Wr \Sym( \ell a/d)| < b\, c^{a \ell}\, \ell^{a \ell/d}.
\]
Also, by Lemma \ref{L:conj}, there exist constants $r$, $s > 0$ such that 
$|g^{G_{n}}| > r\, s^{a \ell} \, \ell^{( a - \sum k_{i}) \ell}$. Since
$a - \sum k_{i} > a/d$, it follows that
\[
\chi(g) = \lim_{n \to \infty} |\, g^{G_{n}} \cap H_{n}\,|/|\, g^{G_{n}}\,| \leq
\lim_{n \to \infty} |\, H_{n}\,|/|\, g^{G_{n}}\,| = 0,
\]
which contradicts the fact that $\chi(g) > 0$ for all $g \in H$.
\end{proof}

Let $I = \{\, n \in \mathbb{N} \mid H_{n} \text{ acts intransitively on } X_{n} \,\}$; and for each
$n \in I$, let
\[
r_{n}= \max \{\, |U| :  U\subseteq X_{n} \text{ is }H_{n}\text{-invariant and }|U|\leq \tfrac{1}{2}|X_{n}| \,\}.
\]
Then each $r_{n} \geq 1$. Furthermore, since $\chi \neq \chi_{\reg}$, Lemma \ref{L:int} implies that the sequence 
$(\, r_{n}\,)_{n \in I}$ is bounded above. Let $r = \lim \inf r_{n}$ and let $I_{r}$ be the set
of integers $n \in I$ such that:
\begin{enumerate}
\item[(i)] $r_{n} = r$;
\item[(ii)] $n > \max \{\, m \in I \mid r_{m} < r \,\}$; and
\item[(iii)] $n > r+1$. 
\end{enumerate}
Here we have chosen $n > r+1$ in order to ensure that $|X_{n}| > 4r$.  

\begin{Lem} \label{L:trans}
If $n \in I_{r}$, then there exists a unique $H_{n}$-invariant subset $U_{n} \subset X_{n}$ of
cardinality $r$ and $H_{n}$ acts transitively on $X_{n} \smallsetminus U_{n}$.
\end{Lem}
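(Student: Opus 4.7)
The plan is to reduce both assertions to the single numerical estimate
\[
|X_{n}| > 4r, \quad \text{equivalently } 2r < |X_{n}|/2,
\]
which follows from condition (iii) together with $|X_{n}| = \prod_{m=0}^{n} k_{m} \geq 2^{n+1}$: indeed $n \geq r+2$ yields $|X_{n}| \geq 2^{r+3} > 4r$ for every $r \geq 1$, and $r \geq 1$ is automatic since any intransitive action of $H_{n}$ on $X_{n}$ produces a nonempty $H_{n}$-invariant subset of size at most $|X_{n}|/2$, so each $r_{n}$ with $n \in I$ is at least $1$.

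For uniqueness of $U_{n}$, I would argue by contradiction. If $U, U' \subset X_{n}$ were distinct $H_{n}$-invariant subsets of cardinality $r$, then $U \cup U'$ would be $H_{n}$-invariant with $r < |U \cup U'| \leq 2r < |X_{n}|/2$, directly contradicting the defining maximality of $r_{n} = r$ as the largest cardinality of an $H_{n}$-invariant subset of $X_{n}$ of size at most $|X_{n}|/2$.

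For transitivity of $H_{n}$ on $V_{n} := X_{n} \smallsetminus U_{n}$, I would again argue by contradiction. If the $H_{n}$-action on $V_{n}$ had at least two orbits, decompose $V_{n} = W \sqcup W'$ with $W$ and $W'$ nonempty and $H_{n}$-invariant, chosen so that $|W| \leq |W'|$. Then
\[
|W| \leq |V_{n}|/2 = (|X_{n}| - r)/2 < |X_{n}|/2,
\]
so the defining maximality of $r_{n}$ forces $|W| \leq r$. But now $U_{n} \cup W$ is an $H_{n}$-invariant subset of $X_{n}$ of size $r + |W| \leq 2r < |X_{n}|/2$, so maximality gives $r + |W| \leq r$, i.e., $W = \emptyset$, a contradiction.

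The only real obstacle is the arithmetic bookkeeping: each candidate $H_{n}$-invariant set produced in the argument must be verified to lie below the threshold $|X_{n}|/2$ at which the defining property of $r_{n}$ becomes applicable, and the bound $|X_{n}| > 4r$ is exactly what is needed for this. Note that only conditions (i) and (iii) in the definition of $I_{r}$ are used here; condition (ii) is auxiliary, guaranteeing that $I_{r}$ is infinite for use in the remainder of the proof of Theorem \ref{T:main}.
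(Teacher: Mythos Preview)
Your proof is correct and follows essentially the same route as the paper: both arguments hinge on the estimate $|X_{n}|>4r$ (which the paper states as the purpose of condition (iii) without doing the arithmetic you supply) and then use maximality of $r_{n}=r$ to rule out any nontrivial $H_{n}$-invariant subset of $X_{n}\smallsetminus U_{n}$ of size at most $|X_{n}|/2$. Your transitivity argument is a minor rephrasing of the paper's (you take the smaller of two invariant pieces; the paper takes an $H_{n}$-orbit), and you add an explicit uniqueness argument via $U\cup U'$, which the paper's written proof actually omits---uniqueness there is left to follow implicitly from the fact that the complement is a single orbit of size exceeding $|X_{n}|/2$.
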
 

\begin{proof}
By definition, there exists at least one $H_{n}$-invariant subset $U_{n} \subset X_{n}$ of
cardinality $r$. Suppose that $V \subseteq X_{n} \smallsetminus U_{n}$ is an $H_{n}$-orbit. If
$|V| \leq \tfrac{1}{2}|X_{n}|$, then $|V| \leq r$ and so the $H_{n}$-invariant subset 
$U_{n} \cup V$ satisfies $r < |U_{n} \cup V| \leq 2r < \tfrac{1}{2}|X_{n}|$, which is a contradiction. 
Thus each $H_{n}$-orbit $V \subseteq X_{n} \smallsetminus U_{n}$ satisfies $|V| > \tfrac{1}{2}|X_{n}|$, 
and this clearly implies that $H_{n}$ acts transitively on $X_{n} \smallsetminus U_{n}$.
\end{proof}

For each $n \in I_{r}$, let
$K_{n} = \{\, g \in H_{n} \mid g \cdot u = u \text{ for all } u \in U_{n} \,\}$ be
the pointwise stabilizer of $U_{n}$ and
let $Y_{n} = X_{n} \smallsetminus U_{n}$. As usual,
we will identify $K_{n}$ with the corresponding subgroup of $\Sym(Y_{n})$. 

\begin{Lem} \label{L:alt}
If $n \in I_{r}$, then $\Alt(Y_{n}) \leqslant K_{n} \leqslant \Sym(Y_{n})$.
\end{Lem}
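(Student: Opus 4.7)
The plan is to emulate the proof of Lemma \ref{L:intransitive} for the restricted action of $H_n$ on $Y_n$, and then to bridge from this induced action to the pointwise stabilizer $K_n$ via a short simplicity argument. Let $\bar H_n \leqslant \Sym(Y_n)$ be the image of $H_n$ under restriction to $Y_n$. Since $K_n$ acts trivially on $U_n$, its restriction to $Y_n$ is injective, and we identify $K_n$ with its image in $\bar H_n$; then $K_n \trianglelefteq \bar H_n$ with $[\bar H_n : K_n] \leq [H_n : K_n] \leq r!$.

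My first step is to prove $\bar H_n \geqslant \Alt(Y_n)$ for all but finitely many $n \in I_r$. Fix any $1 \neq g \in H$, say $g \in H_m$, and let $\ell = |X_n|/|X_m|$, so that by Lemma \ref{L:conj} one has $|g^{G_n}| \geq r' s^\ell \ell^\ell$ for constants $r', s > 0$, while $\chi(g) > 0$ by hypothesis. Since $\bar H_n$ is transitive on $Y_n$ by Lemma \ref{L:trans}, the standard trichotomy applies, and if $\bar H_n \not\geqslant \Alt(Y_n)$ for infinitely many $n \in I_r$ then, after passing to a subsequence, one of three cases occurs. Either $\bar H_n$ is primitive on $Y_n$, in which case Praeger-Saxl (Theorem \ref{T:ps}) yields $|\bar H_n| < 4^{|Y_n|}$, so $|H_n| \leq r! \cdot 4^{|Y_n|}$ is dominated by $|g^{G_n}|$ for large $\ell$, forcing $\chi(g) = 0$; or $\bar H_n$ is imprimitive with maximal blocksize $d_n \to \infty$, in which case Lemma \ref{L:block} applies directly to $H_n < G_n$ with $U = U_n$ and blocksize $d_n$, again giving $\chi(g) = 0$; or $\bar H_n$ is imprimitive with a fixed maximal blocksize $d$, in which case an analogue of Lemma \ref{L:imp} (using $|H_n| \leq r! \cdot |\bar H_n|$) shows that the induced action of $\bar H_n$ on its block system $\mathcal B_n$ contains $\Alt(\mathcal B_n)$ for all but finitely many such $n$, and then one lifts an $|\mathcal B_m|$- or $(|\mathcal B_m| - 1)$-cycle on $\mathcal B_m$ to an element $g \in H_m$ with at most $2d + r$ orbits on $X_m$, so that Lemmas \ref{L:conj} and \ref{L:wreath} combine to give $|H_n|/|g^{G_n}| \to 0$. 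In each case we arrive at a contradiction with $\chi(g) > 0$.

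Granting $\bar H_n \geqslant \Alt(Y_n)$, the conclusion $\Alt(Y_n) \leqslant K_n$ follows from the simplicity of $\Alt(Y_n)$, which holds once $|Y_n| \geq 5$. Indeed, $\Alt(Y_n) \trianglelefteq \bar H_n$, so the normal subgroup $K_n \cap \Alt(Y_n)$ of the simple group $\Alt(Y_n)$ is either $\Alt(Y_n)$, giving the conclusion, or trivial; in the latter case $|K_n| \cdot |\Alt(Y_n)| = |K_n \cdot \Alt(Y_n)| \leq |\bar H_n| \leq 2 |\Alt(Y_n)|$ forces $|K_n| \leq 2$, whence $|\bar H_n| \leq r! \cdot |K_n| \leq 2 r!$, contradicting $|\bar H_n| \geq |\Alt(Y_n)| = |Y_n|!/2$ for $|Y_n|$ large. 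The main obstacle is the bounded-blocksize case of the first step, exactly as in Lemma \ref{L:intransitive}: a crude estimate via Praeger-Saxl is insufficient against the wreath-product bound $\ell^{a\ell/d}$, so one must bootstrap by first promoting $\bar H_n|_{\mathcal B_n}$ to contain $\Alt(\mathcal B_n)$ and then manufacturing $g \in H_m$ satisfying $a - \sum k_i > a/d$; the fixed $H_n$-invariant subset $U_n$ of cardinality $r$ is an inessential perturbation absorbed by the constant factor $r!$ throughout.
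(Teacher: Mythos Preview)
Your proposal is correct and follows essentially the same route as the paper. The paper's proof also first establishes $\bar H_n \geqslant \Alt(Y_n)$ by rerunning the trichotomy argument of Lemma~\ref{L:intransitive} on $Y_n$ (with the fixed set $U_n$ absorbed as a harmless $r!$ factor, exactly as you do), and then deduces $\Alt(Y_n) \leqslant K_n$ from $K_n \trianglelefteq \bar H_n$ together with the index bound $[\bar H_n : K_n] \leqslant r! < |\Alt(Y_n)|$; your explicit simplicity argument is precisely what underlies the paper's terse ``hence''.
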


\begin{proof}
Let $\Bar{H}_{n}$ be the subgroup of $\Sym(Y_{n})$ induced by the action of $H_{n}$ on $Y_{n}$.
Then, arguing as in the proof of Lemma \ref{L:intransitive}, we see first that 
$\Bar{H}_{n}$ must act primitively on $Y_{n}$ and then that $\Alt(Y_{n}) \leqslant \Bar{H}_{n}$.
Let $\pi_{n}: H_{n} \to \Sym(U_{n})$ be the homomorphism defined by $g \mapsto g \res U_{n}$.
Then $K_{n} = \ker \pi_{n} \trianglelefteq H_{n}$; and identifying $K_{n}$ with the corresponding
subgroup of $\Sym(Y_{n})$, we have that $K_{n} \trianglelefteq \Bar{H}_{n}$. Since
$|Y_{n}| = |X_{n}| - r > 3r$, it follows that 
$[\, \Bar{H}_{n}: K_{n}\,] \leq r! < |\Alt(Y_{n})|$ and hence $\Alt(Y_{n}) \leqslant K_{n}$.
\end{proof}

From now on, let $n_{0} = \min I_{r}$.

\begin{Lem} \label{L:cofinite}
\begin{enumerate}
\item[(i)] $I_{r} = \{\, n \in \mathbb{N} \mid n \geq n_{0}\,\}$.
\item[(ii)] For each $n \in I_{r}$ and $w \in U_{n}$, there exists a unique
$i \in [k_{n+1}]$ such that $w\sphat\, i \in U_{n+1}$. 
\end{enumerate}
\end{Lem}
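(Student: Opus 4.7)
The plan is to prove (i) and (ii) simultaneously by induction on $n \geqslant n_{0}$, with the base case $n = n_{0}$ immediate from the definition. For the inductive step, assume $n \in I_{r}$. The key structural observation is that any $H_{n+1}$-invariant subset $V \subseteq X_{n+1}$ is automatically $K_{n}$-invariant, since $K_{n} \leqslant H_{n} \leqslant H_{n+1}$. Because $K_{n} \geqslant \Alt(Y_{n})$ by Lemma~\ref{L:alt} and acts diagonally on $X_{n+1}$, it fixes $U_{n} \times [k_{n+1}]$ pointwise and acts transitively on each slice $Y_{n} \times \{i\}$. Therefore $V$ admits a decomposition
\[
V = \bigcup_{i \in T} (Y_{n} \times \{i\}) \cup W
\]
for some $T \subseteq [k_{n+1}]$ and some $H_{n}$-invariant $W \subseteq U_{n} \times [k_{n+1}]$.

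Applying this to $V = U_{n+1}$, I would first rule out $T \neq \emptyset$. The case $T = [k_{n+1}]$ is immediate: then $|U_{n+1}| \geqslant k_{n+1}|Y_{n}| > |X_{n+1}|/2$, contradicting $|U_{n+1}| \leqslant |X_{n+1}|/2$. The case $\emptyset \neq T \subsetneq [k_{n+1}]$ would place $H_{n+1}$ into an imprimitive structure on $X_{n+1}$ forcing $\chi(g) = 0$ for a suitable $g \in K_{n} \leqslant H$, by an adaptation of the probabilistic arguments in Section~\ref{S:prob}, contradicting $\chi(g) > 0$. Hence $U_{n+1} \subseteq U_{n} \times [k_{n+1}]$. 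Next, letting $\pi \colon X_{n+1} \to X_{n}$ be the projection, and observing that an element $g \in G_{n}$ belongs to $K_{n+1}$ iff $g$ fixes $\pi(U_{n+1})$ pointwise (since $G_{n}$ embeds diagonally), we obtain $\Sym(X_{n} \smallsetminus \pi(U_{n+1})) = K_{n+1} \cap G_{n} \leqslant H_{n}$. Since $H_{n}$ preserves $U_{n}$ setwise and this symmetric group acts transitively on its support of size $\geqslant |Y_{n}| \geqslant 2$, the set $X_{n} \smallsetminus \pi(U_{n+1})$ must lie entirely inside $U_{n}$ or entirely inside $Y_{n}$; combined with $\pi(U_{n+1}) \subseteq U_{n}$ this forces $\pi(U_{n+1}) = U_{n}$.

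Setting $a_{w} = |\{i \in [k_{n+1}] \colon (w,i) \in U_{n+1}\}| \geqslant 1$ for each $w \in U_{n}$, we have $|U_{n+1}| = \sum_{w \in U_{n}} a_{w} \geqslant r$, with equality if and only if $a_{w} = 1$ for every $w$. The final step, which is the principal obstacle, is to show that $a_{w} = 1$ for all $w$, simultaneously yielding (i) $r_{n+1} = r$ and (ii) the unique fiber property. Suppose toward contradiction that $a_{w_{0}} \geqslant 2$ for some $w_{0} \in U_{n}$. I would then construct $g \in H$ combining the $K_{n}$-action on $Y_{n}$ with the extra fiber structure around $w_{0}$, and show that $|\{s \in G_{m} \colon sgs^{-1} \in H_{m}\}|/|G_{m}| \to 0$ as $m \to \infty$, adapting the probabilistic estimates of Lemmas~\ref{L:block} and~\ref{L:int} to this refined block structure, thereby yielding $\chi(g) = 0$ and contradicting $\chi(g) > 0$.
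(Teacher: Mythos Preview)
Your forward induction ($n \to n+1$) runs into a genuine obstacle that the paper sidesteps by arguing \emph{backward} ($n \to n-1$). The paper's proof is short: for $n \in I_{r}$ with $n > n_{0}$, set $U = \{\, u \res n-1 : u \in U_{n}\,\}$; since $\Alt(Y_{n}) \leqslant K_{n}$ and the embedding is diagonal, one obtains $\Alt(X_{n-1} \smallsetminus U) \leqslant H_{n-1}$, whence $r_{n-1} \leq |U| \leq |U_{n}| = r$. But clause~(ii) in the definition of $I_{r}$ already guarantees $r_{m} \geq r$ for every $m \geq n_{0}$, so $r_{n-1} = |U| = r$. This gives both $n-1 \in I_{r}$ and that the projection $U_{n} \to U_{n-1}$ is a bijection. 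Since $I_{r}$ is infinite, downward induction from arbitrarily large $n$ yields (i) and (ii) simultaneously, with no further estimates needed.

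Your approach cannot exploit this built-in lower bound: to get $n+1 \in I_{r}$ you must establish $r_{n+1} \leq r$, and there is no analogous projection argument. At your steps ruling out $\emptyset \neq T \subsetneq [k_{n+1}]$ and $a_{w_{0}} \geq 2$ you invoke the probabilistic lemmas of Section~\ref{S:prob}, but those lemmas are asymptotic in $m$: concluding $\chi(g)=0$ requires the bad structure of $H_{m}$ to persist for infinitely many $m$, whereas a failure of your inductive step furnishes it only at the single level $n+1$. You have no control over $H_{m}$ for $m > n+1$ precisely because the induction has not yet reached those levels, so the appeal is circular. (There is also a slip in the claim $\Sym(X_{n} \smallsetminus \pi(U_{n+1})) = K_{n+1} \cap G_{n}$: by definition $K_{n+1} \leqslant H_{n+1}$, so the left-hand intersection lies inside $H_{n}$, and not every element of $G_{n}$ fixing $\pi(U_{n+1})$ pointwise need belong to $H_{n}$.)
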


\begin{proof}
Suppose that $n_{0} < n \in I_{r}$. Let $U = \{\, u \res n-1 \mid u \in U_{n} \,\} \subseteq X_{n-1}$ and let
\[
U^{+}_{n} = \{\, w \in X_{n} \mid (\, \exists u \in U_{n}\,)\,\, w \res n-1 = u \res n-1 \,\}.
\]
Then $\Alt( X_{n} \smallsetminus U_{n}^{+}) \leqslant K_{n} \leqslant H_{n}$ and this implies that
$\Alt(X_{n-1} \smallsetminus U) \leqslant H_{n-1}$. Thus $r_{n-1} \leq |U| \leq |U_{n}| = r$.
Since $n-1 \geq n_{0} > \max \{\, m \in I \mid r_{m} < r \,\}$, it follows that $r_{n-1} = r$
and this implies that the map $u \mapsto u \res n-1$ from $U_{n}$ to $U$ is injective. The result follows.
\end{proof}

Let $X = \prod_{n \geq 0} [k_{n}]$ and let 
$\mathcal{B}_{H} = \{\, x \in X \mid x \res n \in U_{n} \text{ for all } n \in I_{r}\,\}$.
Then clearly $\mathcal{B}_{H}$ is an element of the standard Borel space $[\,X\,]^{r}$
of $r$-element subsets of $X$; and it is easily checked that $\mathcal{B}_{H}$ is precisely the
set of $x \in X$ such that the corresponding orbit $H \cdot x$ is finite. It follows that
the Borel map $H \overset{\varphi}{\mapsto} \mathcal{B}_{H}$, defined on the $\nu$-measure 1
subset of those $H \in \Sub_{G}$ such that
\begin{enumerate}
\item[(a)] $H$ satisfies (\ref{E:generic}),
\item[(b)] $H$ is not a normal subgroup of $G$, and 
\item[(c)] if $g \in H$, then $\chi(g) > 0$,
\end{enumerate}
is $G$-equivariant; and so $\varphi_{*}\nu$ is an ergodic $G$-invariant probability measure on $[\,X\,]^{r}$.

Suppose now that there exists $n \in I_{r}$ such that $H_{n}$ acts nontrivially on $U_{n}$; say,
$u \neq g \cdot u = v$, where $u$, $v \in U_{n}$ and $g \in H_{n}$. Then, regarding $g$ as an element
of $H_{n+1}$, we have that $u\sphat\,i \neq g \cdot u\sphat\,i = v\sphat\,i$ for all $i \in [k_{n+1}]$
and it follows that there exists $i \in [k_{n+1}]$ such that $u\sphat\,i, v\sphat\,i \in U_{n+1}$. 
Continuing in this fashion, we see that there exist $x \neq y \in \mathcal{B}_{H}$ such that
$x \mathbin{E_{0}} y$, where $E_{0}$ is the Borel equivalence relation defined on $X$ by
\[
x \mathbin{E_{0}} y \quad \Longleftrightarrow \quad 
x(n) = y(n) \text{ for all but finitely many } n \in \mathbb{N}.
\]
Consequently, the following result implies that $\nu$ concentrates on those $H \in \Sub_{G}$
such that $H_{n}$ acts trivially on $U_{n}$ for all $n \in I_{r}$.

\begin{Lem} \label{L:compress}
There does {\em not\/} exist a $G$-invariant Borel probability measure on the standard Borel space
$S = \{\, F \in [\,X\,]^{r} : E_{0} \res F \text{ is not the identity relation }\}$.
\end{Lem}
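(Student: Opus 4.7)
My plan is to apply Nadkarni's theorem: a countable Borel equivalence relation on a standard Borel space admits a Borel invariant probability measure if and only if it is not compressible. Since the $G$-orbit equivalence relation on $S$ is countable and Borel, it suffices to exhibit a Borel injection $\varphi: S \to S$ with $\varphi(F) \in [F]_{G}$ for every $F \in S$ and with $S \setminus \varphi(S)$ a complete $G$-section.

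The key observation is the structure of the $G$-orbits on $[\,X\,]^{r}$. Since each $G_{n} = \Sym(X_{n})$ acts as the full symmetric group on the first $n+1$ coordinates and hence preserves each $E_{0}$-class setwise, a short argument---choose $n$ large enough that all elements of $F$ and of $F'$ lying in a common $E_{0}$-class share a common tail from level $n$ onward, and then invoke the transitivity of $G_{n}$ on $X_{n}$---shows that $F, F' \in [\,X\,]^{r}$ lie in the same $G$-orbit if and only if $|F \cap C| = |F' \cap C|$ for every $E_{0}$-class $C$ of $X$. Consequently, each $G$-orbit of $F \in [\,X\,]^{r}$ is in natural bijection with the finite product of countable sets $\prod_{C} [\,C\,]^{|F \cap C|}$, and for $F \in S$ at least one factor $[\,C\,]^{k}$ has $k \geq 2$.

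To build the compression, I fix auxiliary Borel data: using the hyperfiniteness of $E_{0}$ (generated by the Borel odometer on $X$), one obtains, uniformly in $C$ and $k$, a Borel enumeration $A^{C, k}_{0}, A^{C, k}_{1}, \ldots$ of $[\,C\,]^{k}$, together with a Borel linear order on $[\,X\,]^{k}$. For $F \in S$, let $k(F) = \max_{C}|F \cap C| \geq 2$, let $C^{*}(F)$ be the $E_{0}$-class satisfying $|F \cap C^{*}(F)| = k(F)$ whose canonical index-zero subset $A^{C^{*}(F), k(F)}_{0}$ is smallest in the chosen Borel order---so $C^{*}(F)$ depends only on the $G$-orbit of $F$, since both the multiplicity function $C \mapsto |F \cap C|$ and the canonical subsets $A^{C, k}_{0}$ are orbit invariants---and let $n(F)$ be the enumeration index of $A(F) := F \cap C^{*}(F)$. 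Define
\[
\varphi(F) = (F \setminus A(F)) \cup A^{C^{*}(F),\, k(F)}_{n(F) + 1}.
\]
Then $\varphi$ is Borel and preserves the $G$-orbit of $F$ (multiplicities unchanged); it is injective because $C^{*}(\varphi(F)) = C^{*}(F)$, so from $\varphi(F)$ one recovers $C^{*}$, then the enumeration index $n(F) + 1$, hence $n(F)$, $A(F)$, and $F$; and $S \setminus \varphi(S)$ contains the complete $G$-section $\{F \in S : n(F) = 0\}$. The main technical point is the uniform Borel setup of the enumeration of $[\,C\,]^{k}$ and the orbit-invariant selection of $C^{*}(F)$, which is routine using standard descriptive set theory applied to the hyperfinite relation $E_{0}$.
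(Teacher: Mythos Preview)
Your overall strategy (compress the orbit equivalence relation and invoke Nadkarni) matches the paper's, and your description of the $G$-orbits on $[X]^{r}$ is correct. However, the compression you build has a genuine gap.

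The problem is the ``uniform Borel enumeration $A^{C,k}_{0}, A^{C,k}_{1}, \ldots$ of $[C]^{k}$'' that you claim depends only on the $E_{0}$-class $C$ (and on $k$). You explicitly need this independence of representative: you use $A^{C,k}_{0}$ as an orbit invariant to select $C^{*}(F)$, you need $C^{*}(\varphi(F)) = C^{*}(F)$ for injectivity, and you need the index $n(F)$ to be well defined so that $\{F:n(F)=0\}$ is a complete section. But no such enumeration exists. If a Borel map $n$ on $\bigsqcup_{C} [C]^{k}$ restricted to a bijection $[C]^{k}\to\mathbb{N}$ on each fibre, then $T = n^{-1}(0)$ would be a Borel set selecting exactly one $k$-element subset from each $E_{0}$-class. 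The union $\bigcup T \subseteq X$ would then be a Borel set meeting every $E_{0}$-class in exactly $k$ points; restricting $E_{0}$ to $\bigcup T$ yields a Borel equivalence relation with all classes finite, hence smooth, and a Borel transversal for it is a Borel transversal for $E_{0}$ on all of $X$ --- contradicting the non-smoothness of $E_{0}$. Hyperfiniteness (equivalently, the odometer) only produces an enumeration of $C$ or of $[C]^{k}$ \emph{relative to a chosen basepoint in $C$}, and that basepoint dependence is precisely what cannot be eliminated in a Borel fashion.

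The paper sidesteps this obstruction by using an invariant that is honestly Borel on $S$ without reference to any $C$-indexed data: for $F \in S$ let $\ell(F)$ be the largest coordinate at which some $E_{0}$-related pair in $F$ still disagrees, and set $S_{\ell} = \{\,F \in S : \ell(F) = \ell\,\}$. One checks directly that $S_{\ell} \preceq S_{m}$ for $\ell < m$ (push the disagreement out to coordinate $m$ using an element of $G_{m}$), so for any infinite co-infinite $I \subseteq \mathbb{N}$ the set $A = \bigcup_{\ell \in I} S_{\ell}$ satisfies $S \sim A$ while $S \smallsetminus A$ meets every orbit.
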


Before proving Lemma \ref{L:compress}, we will complete the proof of Theorem \ref{T:main}. Continuing
our analysis of the $\nu$-generic subgroup $H < G$, we can suppose that $H_{n}$ acts trivially on
$U_{n}$ for all $n \in I_{r}$. Hence, applying Lemmas \ref{L:alt} and \ref{L:cofinite}, we see that
$\Alt(Y_{n}) \leqslant H_{n} \leqslant \Sym(Y_{n})$ for all $n \geq n_{0}$. If $n_{0} \leq n < m$ and
$k_{m}$ is even, then identifying $G_{n} = \Sym(X_{n})$ with the corresponding subgroup of
$G_{m} = \Sym(X_{m})$, we have that $\Sym(Y_{n}) \leq \Alt(Y_{m})$ and so $H_{n} = \Sym(Y_{n})$. 
In particular, if $G$ is a simple $SD\mathcal{S}$-group, then $H_{n} = \Sym(Y_{n})$ for all $n \geq n_{0}$.
Similarly, if $G$ is not simple, then either $H_{n} = \Sym(Y_{n})$ for all $n \geq n_{0}$, or else
$H_{n} = \Alt(Y_{n})$ for all but finitely many $n \geq n_{0}$. 

\begin{Notation} \label{N:two}
For each finite set $Y$, let $S^{+}(Y) = \Sym(Y)$ and $S^{-}(Y) = \Alt(Y)$.
\end{Notation}

\begin{Def} \label{D:target}
For each $r \geq 1$ and $\varepsilon = \pm$, let $\mathcal{S}_{r}^{\varepsilon}$ be the 
standard Borel space of subgroups $H < G$ such that there is an integer $n_{0}$ such that for all 
$n \geq n_{0}$, there exists a subset $U_{n} \subset X_{n}$ of cardinality $r$ such that 
$H_{n} = S^{\varepsilon}(X_{n} \smallsetminus U_{n})$.
\end{Def}

Summing up, we have shown that there exists $r \geq 1$ and $\varepsilon = \pm$ such that
the ergodic IRS $\nu$ concentrates on $\mathcal{S}_{r}^{\varepsilon}$. Thus the following
lemma completes the proof of Theorem \ref{T:main}.

\begin{Prop} \label{P:unique}
$\sigma_{r}$, $\tilde{\sigma}_{r}$ are the unique ergodic probability measures
on $\mathcal{S}^{+}_{r}$, $\mathcal{S}^{-}_{r}$ under the conjugation action of $G$.
\end{Prop}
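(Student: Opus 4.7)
The plan is to reduce Proposition \ref{P:unique} to a unique ergodicity statement for the $G$-action on $[\,X\,]^r$ via the correspondence $H \mapsto \mathcal{B}_H$, and then verify the latter by analysing the finite-stage projections.

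First I would check that the $G$-equivariant Borel map $H \mapsto \mathcal{B}_H$ is in fact a bijection from $\mathcal{S}_r^{\varepsilon}$ onto $[\,X\,]^r$. Injectivity follows from the discussion preceding Lemma \ref{L:compress}: for $H \in \mathcal{S}_r^{\varepsilon}$ the sets $U_n = \{x \res n : x \in \mathcal{B}_H\}$ satisfy $H_n = S^{\varepsilon}(X_n \smallsetminus U_n)$ and therefore determine $H$. For surjectivity, given $\{x_1, \ldots, x_r\} \in [\,X\,]^r$, choose $n_0$ so that $x_1 \res n, \ldots, x_r \res n$ are distinct for all $n \geq n_0$; setting $H_n = S^{\varepsilon}(X_n \smallsetminus \{x_i \res n : 1 \leq i \leq r\})$, the inclusion $H_n \leqslant H_{n+1}$ under $\varphi_n$ holds because $\varphi_n(g)$ acts on $X_{n+1} = X_n \times [k_{n+1}]$ as $g \times \id$, which fixes $\{x_i \res (n+1) : 1 \leq i \leq r\}$ pointwise whenever $g$ fixes $\{x_i \res n : 1 \leq i \leq r\}$ pointwise and has the same sign as $g$ (which handles the case $\varepsilon = -$).

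Next I would identify both target measures with a common push-forward on $[\,X\,]^r$. For $\mu^{\otimes r}$-almost every $\bar{x} = (x_1, \ldots, x_r) \in X^r$ the coordinates are distinct, and then $G_{\bar{x}} = \bigcup_n \Sym(X_n \smallsetminus \{x_i \res n\})$ while $G_{\bar{x}} \cap A(G) = \bigcup_n \Alt(X_n \smallsetminus \{x_i \res n\})$; both of these subgroups are sent by $H \mapsto \mathcal{B}_H$ to the set $\{x_1, \ldots, x_r\}$. Hence $\sigma_r$ and $\tilde{\sigma}_r$ both push forward to the single measure $\lambda_r$ on $[\,X\,]^r$, namely the image of $\mu^{\otimes r}$ under the symmetrization map $(x_1, \ldots, x_r) \mapsto \{x_1, \ldots, x_r\}$. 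Therefore Proposition \ref{P:unique} will follow once $\lambda_r$ is shown to be the unique $G$-invariant Borel probability measure on $[\,X\,]^r$ (uniqueness of course forces ergodicity).

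The main remaining step, which I expect to be the principal obstacle, is this uniqueness. Let $\lambda$ be any $G$-invariant Borel probability measure on $[\,X\,]^r$, and for each $n$ with $|X_n| \geq r$ consider the projection $p_n : [\,X\,]^r \to [\,X_n\,]^{\leq r}$, $F \mapsto \{x \res n : x \in F\}$. Since $G_n = \Sym(X_n)$ acts transitively on each orbit $[\,X_n\,]^k$ for $0 \leq k \leq r$, $G$-invariance of $\lambda$ forces
\[
(p_n)_* \lambda = \sum_{k=0}^{r} \alpha_k^{(n)} \cdot \mathrm{Unif}\bigl([\,X_n\,]^k\bigr)
\]
for some nonnegative weights $\alpha_k^{(n)}$ summing to $1$. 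Because any two distinct elements of $X$ must disagree in some coordinate, the events $\{F : |p_n(F)| = r\}$ are non-decreasing in $n$ and exhaust $[\,X\,]^r$, so $\alpha_r^{(n)} \to 1$. Together with the uniform conditional distribution on $[\,X_n\,]^r$ this pins down the $\lambda$-measure of every cylinder set in $[\,X\,]^r$, and therefore $\lambda = \lambda_r$, as required.
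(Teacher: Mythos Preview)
Your proof is correct, but it takes a genuinely different route from the paper's. The paper stays inside $\Sub_G$: given an ergodic measure $m$ on $\mathcal{S}_r^{\varepsilon}$, it applies the Pointwise Ergodic Theorem to write $m(B)$, for each basic clopen set $B = \{K : K \cap G_m = L\}$, as the limit $\lim_n |\{g \in G_n : gH_ng^{-1}\cap G_m = L\}|/|G_n|$ along some $m$-generic $H$; it then observes that any two $H,H' \in \mathcal{S}_r^{\varepsilon}$ have $H_n$ and $H'_n$ conjugate in $G_n$ for all large $n$, so this limit is independent of the choice of $H$, and in particular agrees with the one computed for $\sigma_r$ (resp.\ $\tilde\sigma_r$). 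You instead transport the problem to $[X]^r$ via the bijection $H \mapsto \mathcal{B}_H$ and prove unique \emph{invariance} there from the transitivity of $\Sym(X_n)$ on each $[X_n]^k$. Both arguments rest on the same finite homogeneity fact; the paper packages it as ``$H_n$ and $H_n'$ are eventually conjugate,'' while you package it as ``$(p_n)_*\lambda$ is forced to be uniform on $[X_n]^r$.'' Your approach has the small advantage of avoiding the Pointwise Ergodic Theorem and yielding uniqueness among all invariant (not just ergodic) measures, at the cost of having to set up and check the Borel bijection with $[X]^r$. Two minor remarks: the assertion that $\varphi_n(g)$ has the same sign as $g$ is only valid when $k_{n+1}$ is odd, so you should note that the case $\varepsilon=-$ only arises when $G$ is non-simple, i.e.\ when all but finitely many $k_n$ are odd; and the final sentence deserves one more line, namely that any cylinder $p_m^{-1}(A)$ can also be measured through $p_n$ for $n\geq m$, so the contribution from $[X_n]^{<r}$ disappears in the limit since $\alpha_r^{(n)}\to 1$.
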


\begin{proof}
Suppose, for example, that $m$ is an ergodic probability measure on $\mathcal{S}^{+}_{r}$. 
Then it is enough to show that if $B \subseteq \Sub_{G}$ is a basic clopen subset, then
$m(B) = \sigma_{r}(B)$. Let
$B = \{\, K \in \Sub_{G} \mid K \cap G_{m} = L \,\}$, 
where $m \in \mathbb{N}$ and $L \leqslant G_{m}$ is a subgroup. By the Pointwise Ergodic Theorem, 
there exists $H \in \mathcal{S}^{+}_{r}$ such that
\begin{align*}
m(B) &= \lim_{n \to \infty} |\,\{\, g \in G_{n} \mid g H g^{-1} \in B\,\}\,|/|G_{n}| \\
     &= \lim_{n \to \infty} |\,\{\, g \in G_{n} \mid g H_{n} g^{-1} \cap G_{m} = L \,\}\,|/|G_{n}|. 
\end{align*}
Similarly, there exists $H^{\prime} \in \mathcal{S}^{+}_{r}$ such that
\[
\sigma_{r}(B) = \lim_{n \to \infty} |\,\{\, g \in G_{n} \mid g H^{\prime}_{n} g^{-1} \cap G_{m} = L \,\}\,|/|G_{n}|. 
\]
Since $H$, $H^{\prime} \in \mathcal{S}^{+}_{r}$, there exists $a \in \mathbb{N}$ such that
$H_{n}$ and $H^{\prime}_{n}$ are conjugate in $G_{n}$ for all $n \geq a$; and this implies that
\begin{multline*}
\lim_{n \to \infty} |\,\{\, g \in G_{n} \mid g H_{n} g^{-1} \cap G_{m} = L \,\}\,|/|G_{n}| \\ 
= \lim_{n \to \infty} |\,\{\, g \in G_{n} \mid g H^{\prime}_{n} g^{-1} \cap G_{m}= L \,\}\,|/|G_{n}|. 
\end{multline*}
\end{proof}

The remainder of this section will be devoted to the proof of Lemma \ref{L:compress}. First we need to
recall Nadkarni's Theorem on compressible group actions. (Here we follow 
Dougherty-Jackson-Kechris \cite{djk}.) Suppose that $\Gamma$ is a countable discrete group and that 
$E = E^{Y}_{\Gamma}$ is the orbit equivalence relation of a Borel action of $\Gamma$ on a standard Borel space $Y$.
Then $[[E]]$ denotes the set of Borel bijections $f: A \to B$, where $A$, $B \subseteq Y$ are Borel
subsets, such that $f(y) \mathbin{E} y$ for all $y \in A$. If $A$, $B \subseteq Y$ are Borel subsets, then 
we write $A \sim B$ if there exists an $f \in [[E]]$ with $f: A \to B$; and we write $A \preceq B$ if there exists 
a Borel subset $B^{\prime} \subseteq B$ with $A \sim B^{\prime}$.
The usual Schr\"{o}der-Bernstein argument shows that
\[
A \sim B \quad \Longleftrightarrow \quad A \preceq B \text{ and } B \preceq A.
\]
The orbit equivalence relation $E$ is {\em compressible\/} if there exists a Borel
subset $A \subseteq Y$ such that $Y \sim A$ and $Y \smallsetminus A$ intersects
every $E$-class. We will make use of the easy direction of the following theorem;
i.e. the observation that (ii) implies (i).

\begin{Thm}[Nadkarni \cite{n}] \label{T:comp}
If $E = E^{Y}_{\Gamma}$ is the orbit equivalence relation of a Borel action of a countable group $\Gamma$ 
on a standard Borel space $Y$, then the following are equivalent:
\begin{enumerate}
\item[(i)] $E = E^{Y}_{\Gamma}$ is not compressible.
\item[(ii)] There exists a $\Gamma$-invariant Borel probability measure on $Y$.
\end{enumerate}
\end{Thm}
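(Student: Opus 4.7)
The plan is to prove the two implications of the stated equivalence separately, noting that only the direction (ii) $\Rightarrow$ (i) is actually needed for the application to Lemma~\ref{L:compress} later in this paper. The direction (ii) $\Rightarrow$ (i) is a short measure-theoretic argument, while the converse (i) $\Rightarrow$ (ii) is the substantive content of Nadkarni's theorem and requires a Borel analogue of Tarski's theorem on paradoxical decompositions.

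For (ii) $\Rightarrow$ (i), I would argue by contradiction. Suppose $\mu$ is a $\Gamma$-invariant Borel probability measure on $Y$ and that $E$ is compressible, witnessed by a Borel set $A \subseteq Y$ with $Y \sim A$ and with $B := Y \smallsetminus A$ intersecting every $E$-class. By the Feldman-Moore theorem every element of $[[E]]$ can be realized as a countable Borel splicing of $\Gamma$-translations, so $\Gamma$-invariance of $\mu$ upgrades to invariance under $[[E]]$; in particular $\mu(A) = \mu(Y) = 1$ and therefore $\mu(B) = 0$. On the other hand, since $B$ meets every $E$-class, its $\Gamma$-saturation $\bigcup_{\gamma \in \Gamma} \gamma \cdot B$ equals $Y$; countable subadditivity combined with $\Gamma$-invariance gives $\mu(Y) \leq \sum_{\gamma \in \Gamma} \mu(\gamma \cdot B) = 0$, contradicting $\mu(Y) = 1$.

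For (i) $\Rightarrow$ (ii), I would argue the contrapositive: assuming $E$ admits no $\Gamma$-invariant Borel probability measure, I construct a compression. The strategy is a Borel version of Tarski's theorem on paradoxical decompositions. Using the Schr\"{o}der-Bernstein property for $\sim$ that the paper already records, a Hahn-Banach separation argument in $\ell^{\infty}(Y)$ shows that the absence of an invariant probability measure forces a $\Gamma$-paradoxical decomposition of $Y$, namely disjoint Borel sets $A, B \subseteq Y$ each $\sim$-equivalent to $Y$. Such a pair directly witnesses compressibility: taking $A$ itself as the compression, its complement $Y \smallsetminus A$ contains $B \sim Y$, and hence meets every $E$-class.

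The main obstacle lies entirely in the hard direction: transferring the classical Tarski dichotomy to the Borel category and upgrading the finitely additive invariant mean produced by the Hahn-Banach step to a genuine countably additive Borel probability measure. This is where the countable structure of $E$ and the standard-Borel nature of $Y$ are crucially exploited; the full rigorous argument occupies several pages of Section~4 of Dougherty-Jackson-Kechris \cite{djk}, which the paper cites.
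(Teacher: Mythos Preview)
The paper does not actually supply a proof of this theorem: it is quoted as a result of Nadkarni, and the surrounding text explicitly says that only the easy direction (ii) $\Rightarrow$ (i) is used, calling it an ``observation'' without further argument. So there is no paper proof to compare against in the strict sense.

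Your treatment of (ii) $\Rightarrow$ (i) is correct and is exactly the standard short argument underlying that observation: any $f \in [[E]]$ decomposes into countably many Borel pieces on each of which $f$ agrees with a single $\gamma \in \Gamma$, so a $\Gamma$-invariant probability measure is automatically $[[E]]$-invariant, forcing $\mu(Y \smallsetminus A) = 0$ while fullness of $Y \smallsetminus A$ forces $\mu(Y \smallsetminus A) > 0$. (Invoking Feldman--Moore is harmless but a bit more than is needed; the decomposition of a single $f \in [[E]]$ into group pieces follows directly from Lusin--Novikov.)

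For (i) $\Rightarrow$ (ii) you correctly identify this as the substantive content and defer to Nadkarni and to \cite{djk}. Your description of the mechanism, however, compresses two distinct ingredients in a way that could mislead: the Hahn--Banach/Tarski step in the classical setting produces only a finitely additive invariant mean, and the genuinely delicate point (as you note at the end) is the passage to a countably additive Borel probability measure. In the Borel category the argument in \cite{djk} is organized somewhat differently, via the Becker--Kechris comparability lemma and a careful analysis of the $\preceq$ order, rather than literally running Hahn--Banach inside $\ell^{\infty}(Y)$. Since the paper only uses the easy direction, this is a stylistic rather than a mathematical gap for present purposes.
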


Thus to prove Lemma \ref{L:compress}, it is enough to show that the orbit equivalence relation $E$
for the action of $G$ on $S$ is compressible. To see this,
for each $\ell \in \mathbb{N}$, let $S_{\ell}$ be the Borel subset of those $F \in S$
for which $\ell$ is the least integer such that if $x$, $y \in F$ and $x \mathbin{E_{0}} y$, 
then $x(n) = y(n)$ for all $n > \ell$. Clearly if $\ell < m$, then $S_{\ell} \preceq S_{m}$; and it follows that 
if $I \subseteq \mathbb{N}$ is an infinite co-infinite subset, then $A = \bigcup_{\ell \in I} S_{\ell}$ is a Borel
subset such that $S \sim A$ and $S \smallsetminus A$ is full.

\end{document}